\newtheorem{thm}{Theorem}
\newtheorem{lem}{Lemma}
\newtheorem{cor}{Corollary}
\newtheorem{prop}{Proposition}
\theoremstyle{definition}
\newtheorem{defn}{Definition}
\newtheorem{rem}{Remark}
\newtheorem{conj}{Conjecture}
\renewcommand{\Re}{\mathbb R}
\newcommand{\Red}{\Re^d}
\newcommand{\Z}{\mathbb Z}
\newcommand{\N}{\mathbb N}
\newcommand{\Sph}{\mathbb{S}}
\newcommand{\bd}{\partial}
\DeclareMathOperator{\inter}{int}
\DeclareMathOperator{\conv}{conv}
\DeclareMathOperator{\card}{card}
\DeclareMathOperator{\area}{area}
\DeclareMathOperator{\cl}{cl}
\DeclareMathOperator{\vol}{vol}
\begin{document}

\title{Volume of the Minkowski sums of star-shaped sets}

\author[M. Fradelizi]{Matthieu Fradelizi}

\author[Z. L\'angi]{Zsolt L\'angi}

\author[A. Zvavitch]{Artem Zvavitch}

\thanks{The first author is supported in part by the Agence Nationale de la Recherche, projet ASPAG - ANR-17-CE40-0017; the second author is partially supported by the National Research, Development and Innovation Office, NKFI, K-119670, the J\'anos Bolyai Research Scholarship of the Hungarian Academy of Sciences, and grants BME FIKP-V\'IZ and \'UNKP-19-4 New National Excellence Program by the Ministry of Innovation and Technology; the third  author is supported in part by the U.S. National Science Foundation Grant DMS-1101636 and the B\'ezout Labex funded by ANR, reference ANR-10-LABX-58.}

\address{LAMA, Univ Gustave Eiffel, Univ Paris Est Creteil, CNRS, F-77447 Marne-la-Vallée, France}\email{matthieu.fradelizi@u-pem.fr }

\address{Morphodynamics Research Group and Department of Geometry, Budapest University of Technology, Egry J\'ozsef utca 1., Budapest 1111, Hungary}  \email{zlangi@math.bme.hu}

\address{Department of Mathematical Sciences, Kent State University,
Kent, OH 44242, USA}  \email{zvavitch@math.kent.edu}

\date{\today}

\keywords{Minkowski sum, star-shaped set, convex hull}

\subjclass[2010]{Primary: 52A40; Secondary: 52A38, 60E15}

\begin{abstract}
For a compact set $A \subset \Re^d$ and an integer $k\ge1$, let us denote by
$$ A[k] = \left\{a_1+\cdots +a_k: a_1, \ldots, a_k\in A\right\}=\sum_{i=1}^k A$$
the Minkowski sum of $k$ copies of $A$.
A theorem of Shapley, Folkmann and Starr (1969) states that $\frac{1}{k}A[k]$ converges to the convex hull of $A$ 
in Hausdorff distance as $k$ tends to infinity. Bobkov, Madiman and Wang (2011) conjectured that 
the volume of $\frac{1}{k}A[k]$ is non-decreasing in $k$,
or in other words, in terms of the volume deficit between the convex hull of $A$ and $\frac{1}{k}A[k]$, this convergence is monotone.
It was proved by   Fradelizi,   Madiman,   Marsiglietti and  Zvavitch (2016) that this conjecture holds true if $d=1$ but fails for any $d \geq 12$. 
In this paper we show that the conjecture is true for any star-shaped set $A \subset \Re^d$  for $d=2$ and $d=3$ and also for arbitrary dimensions $d \ge 4$ under the condition $k \ge (d-1)(d-2)$. In addition, we investigate the conjecture for connected sets and present a counterexample to a generalization of the conjecture to the Minkowski sum of possibly distinct sets in $\Red$, for any $d \geq 7$.
\end{abstract}

\maketitle

\section{Introduction}

\parskip=4pt

The Minkowski sum of two sets $K, L \subset \Re^d$  is defined as $K+L=\{x+y: x \in K, y \in L\}$, where, for brevity, we set $A[k] = \sum_{i=1}^k A$, for any $k \in {\mathbb N}$ and any compact set $A \subset \Red$.
Since Minkowski sum preserves the convexity of the summands and the set $\frac{1}{k}A[k]$ consists in some particular convex combinations of elements of $A$, the containment $\frac{1}{k}A[k] \subseteq \conv A$, and, for the special case of convex sets, the equality $\frac{1}{k}A[k] =\conv A$ trivially holds; here $\conv A$ denotes the convex hull of $A$. 
These observations suggest that for any compact set $A$, the set $\frac{1}{k}A[k]$ looks ``more convex'' for larger values of $k$.
This intuition was formalized by Starr \cite{St1, St2}, crediting also Shapley and Folkman, and independently by Emerson and Greenleaf \cite{EG}, by proving that the set $\frac{1}{k}A[k]$ approaches $\conv A$ in Hausdorff distance as $k$ approaches infinity and by giving bounds on the speed of this convergence (we refer to \cite{FMMZ2} for more discussion of this fact).

A further step in the investigation of the sequence $\left\{ \frac{1}{k}A[k] \right\}$ is to examine the monotonicity of this convergence. Whereas this sequence is clearly not monotonous in terms of containment, the main object of this paper is the following conjecture of  Bobkov, Madiman, Wang  \cite{BMW}, relating the volumes of the elements of the sequence, and in which $\vol(K)$ denotes the Lebesgue measure (volume) of the measurable set $K \subset \Re^d$.

\begin{conj}[Bobkov-Madiman-Wang]\label{conj:BMW}
Let $A$ be a compact set in $\Re^d$ for some $d \in \mathbb{N}$. Then the sequence 
$$\left\{ \vol \left( \frac{1}{k}A[k] \right) \right\}_{k \geq 1}$$ 
is non-decreasing in $k$.
\end{conj}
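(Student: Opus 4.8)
The plan is to deduce the conjecture from a cleaner and stronger statement about the sequence $\phi(k):=\vol(A[k])^{1/d}$. With the convention $A[0]=\{0\}$ we have $\phi(0)=0$, and, writing $B_k:=\tfrac1kA[k]$, the conjecture asserts exactly that $\vol(B_k)^{1/d}=\phi(k)/k$ is non-decreasing. I claim this follows once one knows that $\phi$ is a \emph{convex} sequence:
\begin{equation}
\phi(k-1)+\phi(k+1)\ \ge\ 2\,\phi(k)\qquad\text{for all }k\ge 1.\tag{$\ast$}
\end{equation}
Indeed, $(\ast)$ says the increments $d_k:=\phi(k+1)-\phi(k)$ satisfy $d_0\le d_1\le\cdots$ (with $d_0=\phi(1)\ge0$), so from the telescoping identity $\phi(k)=\sum_{j=0}^{k-1}d_j$ we get $\phi(k)/k=\tfrac1k\sum_{j=0}^{k-1}d_j\le d_k$, which rearranges to $(k+1)\phi(k)\le k\,\phi(k+1)$, i.e.\ $\phi(k)/k\le\phi(k+1)/(k+1)$ for every $k\ge1$. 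The case $k=1$ of $(\ast)$, namely $\vol(A+A)^{1/d}\ge2\vol(A)^{1/d}$, is just the Brunn--Minkowski inequality; it is also Brunn--Minkowski that settles the conjecture along any divisor chain $k\mid k'$, through $B_{mk}=\tfrac1m(B_k)[m]\supseteq B_k$. So the real content is $(\ast)$ in the range $k\ge2$.

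For $(\ast)$ with $k\ge2$, set $C:=A[k-1]$; the inequality becomes $\vol(C+A+A)^{1/d}+\vol(C)^{1/d}\ge2\vol(C+A)^{1/d}$, that is, $f(0)+f(2)\ge2f(1)$ for $f(t):=\vol(C+tA)^{1/d}$. By a routine approximation we may assume $A$, hence also $C$, is a finite union of convex bodies, and we may assume $\conv A$ is full-dimensional (otherwise $\phi\equiv0$). When $A$ is convex we have $C=(k-1)A$ and $f(t)=(k-1+t)\vol(A)^{1/d}$ is affine, so $(\ast)$ holds with equality; the point is to show that this affine behaviour persists, in the convex direction, for non-convex $A$ as well, and for that one must exploit that $C=A[k-1]$ is far from generic relative to $A$. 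The tool is the Shapley--Folkman lemma, giving $k\,\conv A\subseteq A[k-d]+d\,\conv A$ and hence that $A[k-1]$ and $A[k]$ lie within Hausdorff distance $O(\diam A)$ (with constant depending only on $d$) of their convex hulls $(k-1)\conv A$ and $k\,\conv A$; this should allow one to replace $C$ and $C+A$ by genuine convex bodies at the cost of boundary-layer errors that are of relative order $1/k$ in volume, reducing $(\ast)$ to an inequality for convex bodies together with controlled corrections. The base case $d=1$ would be handled separately, by a direct combinatorial analysis of $A[k]$ as a finite union of intervals, following Fradelizi, Madiman, Marsiglietti, and Zvavitch.

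The step I expect to be the genuine obstacle---and, I suspect, a fatal one for the statement exactly as worded---is precisely the passage from that Hausdorff control to the \emph{multiplicative} control on volumes demanded by $(\ast)$, or even by the bare conjecture. Non-decreasing increments $d_k$ amount to saying that the Minkowski powers of $A$ convexify at a steady rate; but for a carefully engineered, highly non-convex $A$ the gains $\vol(A[k])^{1/d}-\vol(A[k-1])^{1/d}$ can arrive in bursts rather than monotonically, and the Shapley--Folkman reduction controls the convexity defect only additively (like $\diam A$), whereas the ratio of the correction terms to $\vol(A[k])$ need not stay small when $A$ is simultaneously thin in many directions. This is exactly the mechanism underlying the known failure of the conjecture for $d\ge12$. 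Accordingly I would expect the plan to succeed only in restricted form: unconditionally for small $d$ (with an increasingly delicate case analysis as $d$ grows to $2$ and $3$), in the stable range $k\gtrsim d^2$ for general compact $A$, or for all $k$ under a shape hypothesis on $A$ (star-shapedness, connectedness) that forces the convexity defect of every $A[k]$ to decay at a controlled multiplicative rate---and quantifying that decay in terms of volume rather than Hausdorff distance is where the substantive work would lie.
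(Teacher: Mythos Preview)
The statement is a \emph{conjecture}, and the paper does not prove it; on the contrary, the introduction explicitly recalls that it is false for every $d\ge 12$ by \cite{FMMZ1,FMMZ2}. There is therefore no proof in the paper to compare against. You acknowledge this yourself in the last paragraph, so the proposal is better read as a heuristic diagnosis than as a proof attempt; as a proof of the conjecture as stated, it cannot succeed.

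Two technical remarks on the approach. First, the reduction to $(\ast)$ is valid as an implication, but $(\ast)$ is \emph{strictly stronger} than the conjecture, not equivalent: non-decrease of $\phi(k)/k$ with $\phi(0)=0$ does not force $\phi$ to be convex (take for instance $\phi(0)=0$, $\phi(1)=1$, $\phi(2)=3$, $\phi(3)=4.5$). So even in the regimes where the conjecture does hold, there is no a priori reason $(\ast)$ should, and targeting $(\ast)$ may well be harder than the original question. Second, your Shapley--Folkman route offers only additive Hausdorff control of the convexity defect, and you correctly identify that converting this into the multiplicative volume control needed for $(\ast)$ is the missing step; it is not merely missing but genuinely impossible in general, by the same counterexamples.

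For comparison, the positive results the paper does prove (Theorem~\ref{thm:main}: star-shaped $S$, all $k$ when $d\le 3$, and $k\ge(d-1)(d-2)$ in general) match two of the three restricted regimes you predict, but the method is entirely different from yours. Rather than approximate $A[k]$ by a convex body, the paper triangulates $\conv(S)$ into simplices $\conv(B_j)$ with apex at the star center, and on each simplex proves a direct volume inequality (Lemma~\ref{lem:main}) by slicing $k\conv(B_j)$ into unit lattice cells and pushing mass from the level set $X_d(t)$ to $X_d(t+1)$ via a weighted adjacency argument. Star-shapedness is used in an essential way: it guarantees $B_j\subset S$, so that $S[k]+B_j\subset S[k+1]$ simplex by simplex. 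Your Shapley--Folkman line does not exploit star-shapedness at all, which is why it cannot separate the star-shaped case in $d\le 3$ (where the inequality holds) from the star-shaped counterexamples in $d\ge 12$ (where it fails). Finally, your predicted ``stable range $k\gtrsim d^2$ for general compact $A$'' is not established in the paper; Theorem~\ref{thm:main} requires star-shapedness even for large $k$.
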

Equivalently, this conjecture asks whether for any integer $k \ge1$ and compact set $A \subset \Re^d$, the following inequality holds
\begin{equation}\label{eq:BMW}
\vol \left( \frac{1}{k} A[k] \right) \leq \vol \left( \frac{1}{k+1} A[k+1] \right).
\end{equation}
This inequality trivially holds for any compact set $A$ if $k=1$ since $A\subset \frac{1}{2} A[2]$.
In the same way, it is easy to find monotone subsequences of the sequence $\{\vol(\frac{1}{k} A[k])\}_{k\ge 1}$ by the same argument;
one such example is $\{\vol(\frac{1}{2^m} A[2^m])\}_{m\ge 0}$.
On the other hand, even the first nontrivial case; that is, the inequality $\vol \left( \frac{1}{2}A[2] \right) \leq \vol \left( \frac{1}{3}A[3] \right)$ seems to require new methods to approach.
Conjecture~\ref{conj:BMW} was partially resolved in \cite{FMMZ1, FMMZ2}, where, following the approach of \cite{GMR}, the authors proved it for any $1$-dimensional compact set $A$, but constructed counterexamples in $\Red$ for any $d \geq 12$. 
More precisely, they showed that 
for every $k \geq 2$, there is $d_k \in \mathbb{N}$ such that for every $d \geq d_k$ there is a compact set $A \subset \Re^d$ such that
$\vol\left( \frac{1}{k} A[k] \right) > \vol\left( \frac{1}{k+1} A[k+1] \right)$. In particular, one has $d_2 = 12$, whence Conjecture~\ref{conj:BMW} fails for $\Re^d$ if $d \geq 12$.

Our goal is to find additional conditions on $A$ and $k$ under which the statement in Conjecture~\ref{conj:BMW}, or more precisely when the inequality (\ref{eq:BMW}) is satisfied.

In the paper, for any set $A \subset \Red$ we denote by $\dim A$ the dimension of the smallest affine subspace containing $A$, and for any $p, q \in \Re^d$, we denote the closed segment with endpoints $p, q$ by $[p,q]$.
To state our main result, let us recall the following well-known concept.

\begin{defn}
A nonempty set $S \subset \Re^d$ is called \emph{star-shaped} with respect to a point $p$ if for any $q \in S$, we have $[p,q] \subseteq S$. 
\end{defn}
Our main result is the following.
\begin{thm}\label{thm:main}
Let $d \ge 2$ and $k \geq \max\{2, (d-1)(d-2) \}$ be  integers. Then for any compact, star-shaped set $S \subset \Re^d$ we have
\[
\vol\left( \frac{1}{k+1} S[k+1] \right) \geq \vol\left( \frac{1}{k} S[k] \right),
\]
with equality if only if    $\dim(S) < d$ or $\frac{1}{k} S[k] = \conv (S)$.
\end{thm}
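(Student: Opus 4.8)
The plan is to exploit the star-shapedness of $S$ to get a lower bound on $\vol\bigl(\frac{1}{k+1}S[k+1]\bigr)$ in terms of $\vol\bigl(\frac{1}{k}S[k]\bigr)$ by a direct containment argument. Assume without loss of generality that $S$ is star-shaped with respect to the origin $o$, so that $\lambda S \subseteq S$ for all $\lambda \in [0,1]$, and consequently $\lambda_1 S + \cdots + \lambda_m S \subseteq S[m]$ whenever $\lambda_1,\dots,\lambda_m \in [0,1]$ (each summand $\lambda_i S$ lies in $S$). The key structural fact I would try to establish is that, for a star-shaped set, $S[k]$ already contains a large scaled copy of $\conv(S)$: namely, that $\frac{1}{k}S[k] \supseteq c_k \,\conv(S)$ for an explicit constant $c_k$ approaching $1$, using a Shapley--Folkman--Starr type argument. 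Indeed, since any point of $\conv(S)$ is a convex combination of at most $d+1$ points of $S$, writing such a combination with weights $t_0,\dots,t_d$ and scaling by $k$, one can round the $kt_i$ to nearby integers; the star-shapedness lets us absorb the fractional remainders (each of size $< 1$) into the set $S$ itself rather than needing extra copies. This should give $\frac{k-d}{k}\conv(S) \subseteq \frac{1}{k}S[k] \subseteq \conv(S)$, or something in this spirit.

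With such a two-sided bound in hand, the inequality would follow from a volume comparison: on the one hand $\frac{1}{k+1}S[k+1] \supseteq \frac{1}{k+1}\bigl(S[k] + \frac{1}{k}S[k]\cdot\text{(rescaled)}\bigr)$ — more carefully, I would write $S[k+1] = S[k] + S \supseteq S[k] + c\,\conv(S)$ for a suitable $c$, or alternatively use that $\frac{k}{k+1}\cdot\frac{1}{k}S[k] + \frac{1}{k+1}S \subseteq \frac{1}{k+1}S[k+1]$ and that $\frac{1}{k}S[k]$ is ``almost'' $\conv(S)$ which contains a translate/scaling of $S$. The arithmetic will need the hypothesis $k \ge (d-1)(d-2)$ precisely to make the loss term $\frac{d}{k}$ (or whatever exact constant emerges) small enough that the gain from adding a fresh copy of $S$ dominates it. The low-dimensional cases $d=2,3$ presumably need a separate, sharper argument since $(d-1)(d-2)$ equals $0$ and $2$ there, so the bound $k\ge\max\{2,(d-1)(d-2)\}$ reduces to $k \ge 2$, which is stronger than what the general rounding estimate gives; I expect these are handled by a more hands-on analysis of how $S[k]$ fills out $\conv(S)$ when the rank of the ``non-convexity'' is at most $d-1$.

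For the equality characterization: if $\dim(S) < d$ then both volumes are $0$, so equality is trivial; if $\frac{1}{k}S[k] = \conv(S)$ then $\frac{1}{k+1}S[k+1] = \conv(S)$ as well (the sequence is squeezed between $\frac{1}{k}S[k]$ and $\conv(S)$ by the Starr inclusion, and adding copies of $S$ to a set whose normalized sum already equals $\conv S$ keeps it equal), so again equality holds. For the converse, I would argue that if $\dim(S)=d$ and $\frac{1}{k}S[k] \subsetneq \conv(S)$, then the strict containment $\frac{1}{k}S[k] \subsetneq \frac{1}{k+1}S[k+1]$ occurs on a set of positive measure — essentially because the ``new directions'' opened up by the extra copy of $S$ genuinely enlarge the set near a boundary point of $\conv(S)$ that is not yet attained.

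\medskip

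\emph{Main obstacle.} The hard part will be the rounding/absorption step: making precise how much of $\conv(S)$ is captured by $\frac{1}{k}S[k]$ for star-shaped $S$, and squeezing the constant down to the threshold $(d-1)(d-2)$ rather than a cruder $d^2$ or $2d$ bound. Getting the exponent on $d$ right — and in particular the separate sharp treatment of $d=2,3$ — is where the real work lies; the volume inequality itself, once the inclusions are set up, should be a short monotonicity-of-Minkowski-sum computation.
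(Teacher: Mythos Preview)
Your proposal has a genuine gap: the Shapley--Folkman--type inclusion $\frac{k-d}{k}\conv(S)\subseteq\frac{1}{k}S[k]$ controls the \emph{absolute} volume deficit $\vol(\conv S)-\vol(\frac{1}{k}S[k])$, but it gives no handle on the \emph{increment} $\vol(\frac{1}{k+1}S[k+1])-\vol(\frac{1}{k}S[k])$. Both containment ideas you sketch for bridging this fail. Writing $S[k+1]\supseteq S[k]+c\,\conv(S)$ would require $S\supseteq c\,\conv(S)$, which is false for the very star-shaped sets that matter here (take $S$ a finite union of segments through $o$; then $S$ has measure zero). And the correct inclusion $\frac{1}{k+1}S[k+1]\supseteq\frac{k}{k+1}\cdot\frac{1}{k}S[k]+\frac{1}{k+1}S$ only yields, after taking volumes, $\vol(\frac{1}{k+1}S[k+1])\ge(\frac{k}{k+1})^d\vol(\frac{1}{k}S[k])$, since $\frac{1}{k}S[k]$ is not convex and Brunn--Minkowski is unavailable. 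That inequality goes the wrong way. Nothing in your outline produces a mechanism by which one extra copy of $S$ \emph{enlarges} the normalized sum.

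The paper's argument is structurally different and supplies exactly this mechanism. It does not use a global Shapley--Folkman bound at all. Instead it triangulates $\conv(S)$ into simplices $\conv(B_j)$ with apex $o$, where $B_j=\bigcup_{t=1}^d[o,p_t^j]\subseteq S$ by star-shapedness; sets $M_j=S[k]\cap k\conv(B_j)$; and proves a local lemma: if $B[k]\subseteq M\subseteq k\conv(B)$ then $\vol(\frac{1}{k+1}(M+B))\ge\vol(\frac{1}{k}M)$. This lemma is the heart of the proof and is obtained by slicing $k\conv(B)$ into unit lattice cells, observing that $(M+B)\cap C_{i+e_j}\supseteq e_j+(M\cap C_i)$, and then choosing weights $\alpha_{ii'}=i_j/(t+1)$ in a convex-combination-of-maxima argument to get $\sum_{i\in X_d(t+1)}\lambda_i\ge\frac{t+d}{t+1}\sum_{i'\in X_d(t)}\mu_{i'}$. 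The threshold $k\ge(d-1)(d-2)$ then emerges from the algebraic condition $(k-d+2)(k+1)^{d-1}\ge k^d$, not from any rounding loss of size $d/k$. Your proposal does not anticipate this cell-by-cell transport idea, and without it the argument cannot close.
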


We notice that Theorem~\ref{thm:main} establishes Conjecture~\ref{conj:BMW} for star-shaped compact sets in dimensions 2 and 3.
It is worth  to remark  that the compact sets $A$ constructed in \cite{FMMZ2} as counterexamples to Conjecture~\ref{conj:BMW} are star-shaped, which makes Theorem~\ref{thm:main} fairly unexpected.

We prove Theorem \ref{thm:main} in Section~\ref{sec:proof}. In Section~\ref{sec3} we adapt our techniques to investigate connected sets.
Our main result in this section is summarized in Theorem~\ref{thm:convexholes}. Finally, in Section~\ref{sec:remarks} we collect some additional remarks and questions, and, in particular, we construct low dimensional counterexamples to a generalization of Conjecture~\ref{conj:BMW}, which also appeared in \cite{BMW}.

\noindent {\bf Acknowledgment:}  The authors  are grateful to the anonymous referee for a careful reading of the paper and constructive comments and corrections.

\section{Conjecture~\ref{conj:BMW} for star-shaped sets: the proof of Theorem~\ref{thm:main}}\label{sec:proof}

We start this section with a couple of Lemmata which are needed for the proof.
Throughout this section, we denote $X_d(t)=\{(x_1, \dots, x_d) \in \N^d: x_1+\ldots + x_d = t \}$ and $N_d(t)=\card X_d(t) $ to be the number of elements of $X_d(t)$.  Here and in the rest of the paper we will denote by $\N$ the set of non-negative integers.

\begin{lem}\label{lem:integerpoints}
For any integer $t \geq 1$, and $d \geq 2$, we have $N_d(t) = \binom{t+d-1}{d-1}$.
\end{lem}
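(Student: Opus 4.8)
The plan is to establish this as the classical ``stars and bars'' count. The cleanest self-contained route is induction on $d$. For the base case $d=2$, the first coordinate $x_1$ may be any of $0,1,\dots,t$ and then $x_2=t-x_1$ is determined, so $N_2(t)=t+1=\binom{t+1}{1}$, as claimed. For the inductive step from $d$ to $d+1$, I would condition on the last coordinate: setting $x_{d+1}=j$ leaves exactly $N_d(t-j)$ choices for $(x_1,\dots,x_d)$, so
\[
N_{d+1}(t)=\sum_{j=0}^{t} N_d(t-j)=\sum_{i=0}^{t} N_d(i).
\]
Here it is convenient to note that the formula $N_d(i)=\binom{i+d-1}{d-1}$ remains valid at $i=0$ (both sides equal $1$, the unique solution being the zero vector), so the inductive hypothesis may be applied to every term of the sum even though the Lemma as stated only asserts it for $i\ge 1$.

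It then remains to evaluate $\sum_{i=0}^{t}\binom{i+d-1}{d-1}=\binom{t+d}{d}$, the hockey-stick identity, which I would prove in one line by a secondary induction on $t$: the case $t=0$ reads $\binom{d-1}{d-1}=\binom{d}{d}$, and the step is $\sum_{i=0}^{t}\binom{i+d-1}{d-1}=\binom{t+d-1}{d}+\binom{t+d-1}{d-1}=\binom{t+d}{d}$ by Pascal's rule. Combining, $N_{d+1}(t)=\binom{t+d}{d}$, completing the induction.

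There is essentially no obstacle here; the statement is elementary and the only points needing a word of care are the bookkeeping of the degenerate index $i=0$ in the recursion and the inclusion of a short proof (or citation) of the hockey-stick identity. An equivalent and perhaps more transparent alternative, which I would at least mention, is the direct bijection sending $(x_1,\dots,x_d)\in X_d(t)$ to the $(d-1)$-element subset $\{x_1+1,\ x_1+x_2+2,\ \dots,\ x_1+\cdots+x_{d-1}+(d-1)\}$ of $\{1,\dots,t+d-1\}$, which is readily checked to be a bijection onto all such subsets, of which there are $\binom{t+d-1}{d-1}$.
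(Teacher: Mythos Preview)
Your proof is correct and follows essentially the same route as the paper: induction on $d$ with base case $d=2$, the recursion $N_d(t)=\sum_{s=0}^{t}N_{d-1}(s)$, and the hockey-stick identity to close the sum. The paper's proof is terser (it simply writes $\sum_{s=0}^{t}\binom{s+d-2}{d-2}=\binom{t+d-1}{d-1}$ without proving the identity or remarking on the $s=0$ term), but the argument is the same.
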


\begin{proof}
If $d=2$, then, clearly, $N_2(t) = t+1 = \binom{t+2-1}{1}$.
On the other hand, by induction, we have
\[
N_d(t) = \sum_{s=0}^t N_{d-1}(s) = \sum_{s=0}^t \binom{s+d-2}{d-2} = \binom{t+d-1}{d-1}.
\]
\end{proof}
\begin{lem}\label{lem:main}
Let  $d\ge2$ and $o$ be the origin of $\Re^d$,  $(p_1,\ldots,p_d)$ be a basis of $\Re^d$, and, let $B= \bigcup_{i=1}^d [o,p_i]$. Consider a compact set $M\subset\Re^d$  such that $B[k] \subseteq M \subseteq k \conv (B)$  for some $k \geq \max \{ 2, (d-1)(d-2) \}$, then
\begin{equation}\label{eq:addB}
\vol\left(\frac{1}{k+1}(M+B) \right) \geq \vol \left( \frac{1}{k} M \right),
\end{equation}
where, equality holds if and only if $M = k \conv (B)$. Furthermore, if $\vol \left( \frac{1}{k} M \right) \geq \vol\left(\frac{1}{k+1}(M+B) \right) - \delta$ for some $\delta \geq 0$, then $\vol (M) \geq \vol\left( k \conv (B) \right) - C(d,k) \delta$,  where the constant $C(d,k)=k^d (1- \frac{k^d}{(k-d+2) (k+1)^{d-1}})^{-1}$ depends  only on $d$ and $k$.
\end{lem}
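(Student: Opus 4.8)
The plan is to exploit the combinatorial structure of $B[k]$, which is a union of segments along the $d$ coordinate directions (with respect to the basis $(p_1,\dots,p_d)$). After applying the linear map sending $p_i$ to the standard basis vectors, we may assume $B=\bigcup_{i=1}^d[o,e_i]$, so that $B[k]$ is exactly the union of the integer-translates of the segments $[o,e_i]$ lying inside the dilated simplex $k\conv(B)=k\Delta$, where $\Delta=\conv\{o,e_1,\dots,e_d\}$; concretely, $B[k]=\bigcup_{x\in X}\bigl(x+B\bigr)$ where $X$ is a suitable set of lattice points of $(k-1)\Delta$. Since $M\supseteq B[k]$, the set $\frac1k M$ already contains this ``lattice skeleton'' scaled by $1/k$, and $M+B$ contains $B[k]+B=B[k+1]$ together with $M+B\supseteq M+[o,e_i]$ for each $i$. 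The key point is that adding $B$ to $M$ means adding, for each lattice cell of the grid, a small ``staircase'' of translated segments, and one can compare $\vol(\frac1{k+1}(M+B))$ to $\vol(\frac1k M)$ cell by cell.

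The main quantitative step is to estimate from below $\vol(M+B)$ in terms of $\vol(M)$. I would proceed as follows. First, tile $k\Delta$ by the unit lattice cubes (intersected with $k\Delta$); inside each such cube $Q$ that is fully contained in $k\Delta$, the portion of $B[k]\cap Q$ is a translate of $B$, hence $M\cap Q$ contains a translate of $B$, and $(M+B)\cap(Q+[0,1]^d)$ contains a translate of $B+B=B[2]$, which has strictly larger volume than the cube's worth of $M$ would contribute after rescaling. Summing over the $N_d(\cdot)$-many relevant lattice points and using Lemma~\ref{lem:integerpoints} to count them, I expect to get an inequality of the shape
\[
\vol(M+B)\ \ge\ \Bigl(\tfrac{k+1}{k}\Bigr)^{d-1}(k-d+2)\,\frac{1}{?}\cdots,
\]
more precisely a bound of the form $\vol\bigl(\frac1{k+1}(M+B)\bigr)\ge \frac{(k-d+2)(k+1)^{d-1}}{k^d}\,\vol\bigl(\frac1k M\bigr) + (\text{gain from }\conv(B)\setminus M)$, where the combinatorial factor $\frac{(k-d+2)(k+1)^{d-1}}{k^d}$ is exactly what appears in $C(d,k)$. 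The constraint $k\ge(d-1)(d-2)$ should be precisely what guarantees this factor is $\ge 1$: indeed $(k-d+2)(k+1)^{d-1}\ge k^d$ rearranges (via Bernoulli / convexity estimates on $(1+1/k)^{d-1}$) to roughly $k\ge(d-1)(d-2)$.

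From such an inequality the two claims of the Lemma follow: taking $\delta=0$ gives $\vol(\frac1{k+1}(M+B))\ge\vol(\frac1k M)$ with the stated equality characterization (equality forces the ``gain'' term to vanish, i.e.\ $M$ fills up $k\conv(B)$); and the stability statement is just the contrapositive arithmetic — if $\vol(\frac1k M)\ge \vol(\frac1{k+1}(M+B))-\delta$, then, writing $v=\vol(k\conv(B))-\vol(M)\ge0$ for the deficit, the cell-wise bound yields $v\le \frac{k^d}{(k+1)^{d-1}(k-d+2)}\bigl(v+k^d\delta\bigr)$ (the $\vol$'s rescaled appropriately), which solves to $v\le C(d,k)\,\delta$ with $C(d,k)=k^d\bigl(1-\frac{k^d}{(k-d+2)(k+1)^{d-1}}\bigr)^{-1}$.

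The main obstacle I anticipate is making the ``cell by cell'' comparison rigorous: the sets $M\cap Q$ need not be translates of $B$ (only supersets of them), the cubes near $\bd(k\Delta)$ are truncated and must be handled separately (this is where the $k-d+2$ rather than $k$ enters, losing the $d-2$ boundary layers), and one must ensure the pieces of $M+B$ attributed to different cubes do not overlap — presumably by choosing the decomposition so that $M+B\supseteq\bigsqcup_x (x+B[2])$ with the translates having disjoint interiors, which works because the translates $x+B$ for distinct lattice points $x$ are interior-disjoint and adding $B$ keeps them within adjacent cells. A clean way to organize this may be to integrate the one-dimensional ``fiber'' estimate: for a line $\ell$ in direction $e_d$, compare the measure of $(M+B)\cap\ell$ to that of $M\cap\ell$ using that along $\ell$ the set $M$ contains many unit segments from $B[k]$, then integrate over the $(d-1)$-dimensional family of such lines and iterate over coordinate directions, which reduces everything to the one-dimensional Brunn–Minkowski / counting statement already implicit in Lemma~\ref{lem:integerpoints}.
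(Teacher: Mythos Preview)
Your high-level strategy---reduce to the standard basis, decompose into unit lattice cubes, track the combinatorial factor $\frac{k^d}{(k-d+2)(k+1)^{d-1}}$, and verify it is at most $1$ precisely when $k\ge(d-1)(d-2)$---matches the paper exactly, and your derivation of the stability constant from a linear inequality of the form $\vol\bigl(\frac{1}{k+1}(M+B)\bigr)\ge c\,\vol\bigl(\frac{1}{k}M\bigr)+(1-c)\,\vol(\conv B)$ is the right shape (though your displayed arithmetic has a spurious extra factor of $c$; the correct solve gives $v\le k^d\delta/(1-c)$, not $ck^d\delta/(1-c)$).

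The genuine gap is the mechanism behind the cell-by-cell comparison. Your stated reason---that $(M+B)\cap(Q+[0,1]^d)$ contains a translate of $B[2]$ ``which has strictly larger volume''---does not work: $B[2]$ is a $1$-dimensional set in $\Re^d$ and has volume zero, so it contributes nothing. Likewise, the sets $B[k]$ inside $M$ are all null sets; they constrain $M$ only through what they force after adding $B$. What the paper actually uses is the observation that for each unit cube $C_i$ and each basis vector $e_j$, the translate $(M\cap C_i)+e_j$ lands inside $(M+B)\cap C_{i+e_j}$; hence, writing $\mu_i=\vol(M\cap C_i)$ and $\lambda_i=\vol((M+B)\cap C_i)$, one has $\lambda_i\ge\max\{\mu_{i'}:i'\text{ adjacent to }i\}$. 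This is the engine, and it is absent from your outline.

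The second missing ingredient is how to pass from that pointwise $\max$ inequality to a useful bound on sums. The paper assigns, for each pair $i\in X_d(t+1)$, $i'=i-e_j\in X_d(t)$, the weight $\alpha_{ii'}=i_j/(t+1)$; these are convex coefficients in $i$, so $\lambda_i\ge\sum_{i'}\alpha_{ii'}\mu_{i'}$, and summing the other way each $\mu_{i'}$ picks up total weight $\sum_j(i'_j+1)/(t+1)=(t+d)/(t+1)$, yielding $\sum_{X_d(t+1)}\lambda_i\ge\frac{t+d}{t+1}\sum_{X_d(t)}\mu_i$. Only after this does the boundary-layer bookkeeping and the monotonicity of $t\mapsto(t+d)/(t+1)$ produce the factor $\frac{k+1}{k-d+2}$ you quote. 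Your proposed alternative of integrating a one-dimensional fiber estimate in a single direction $e_d$ would only give $\lambda_i\ge\mu_{i-e_d}$, i.e.\ coefficient $1$ rather than $(t+d)/(t+1)$, which is not enough; the gain comes precisely from averaging over all $d$ directions with the carefully chosen weights.
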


\begin{proof}
Since the inequality  (\ref{eq:addB})  is independent of  a non-degenerate linear transformation applied to $B$ and $M$ simultaneously, we may assume that $(p_1, \ldots, p_d)$ is the canonical basis of $\Re^d$. Let 
$$
V(t)=\vol\{(x_1, \dots, x_d) \in [0,1]^d: x_1+\ldots + x_d \leq t \}.
$$
Let $C_i=i+[0,1]^d$, $i \in \Z^d$ be the unit cube cells of the lattice $\Z^d$, 
and set $\mu_i = \vol(C_i \cap M)$, and $\lambda_i = \vol (C_i \cap (M + B) )$.

Note that for any $i \in X_d(t)$, $\vol(C_i \cap k \conv(B))$ is independent of $i$, namely it is equal to $1$, if $t \leq k-d$, and to $V(k-t)$ if $t=k-d+1, \ldots, k-1$.
A similar statement holds for $\vol(C_i \cap (k+1) \conv(B))$. The number of unit cells contained in $k \conv(B)$ is equal to the number of the solutions of the inequality $x_1 + x_2  + \ldots + x_d \leq k$, where each variable is a positive integer, and thus, it is $\binom{k}{d}$.
 Hence, if $Y_d(k)$ denotes the union of these cells, then we have that
\begin{equation}\label{eq:containedcells}
\vol (Y_d(k)) = k^{\underline{d}} V,
\end{equation}
where $k^{\underline{d}}=k(k-1)\ldots (k-d+1)$, and $V = \vol(\conv B) = \frac{1}{d!}$. Thus,
\begin{equation}\label{eq:kconvB}
\vol(M) = k^{\underline{d}} V+\sum_{t=k-d+1}^{k-1} \sum_{i \in  X_d(t) } \mu_i,
\end{equation}
and
\[
\vol(M+B) = (k+1)^{\underline{d}}V  + \sum_{t=k-d+2}^{k} \sum_{i \in  X_d(t) } \lambda_i.
\]

In the following step, we give a lower bound on the $\lambda_i$'s depending on the values of the $\mu_i$'s.
We say that $i \in X_d(t)$ and $i' \in X_d(t+1)$ are \emph{adjacent} if the corresponding cells $C_i$ and $C_{i'}$ have a common facet, or in other words,
if $i'-i$ coincides with one of the standard basis vectors $p_j$. In this case we write $ii' \in I$.
Let $i \in X_d(t)$, and let $S = M \cap C_i$. Then, for every $j=1,2,\ldots,d$, $S+p_j \subset (M+B) \cap C_{i'}$ with $i'=i+p_j$.
Thus, for any $i \in X_d(t+1)$,
\begin{equation}\label{eq:estimate1}
\lambda_i \geq \max \{ \mu_{i'}: i'\in X_d(t) \hbox{ is adjacent to } i\}.
\end{equation}
Note that the right-hand side of this inequality is not less than any convex combination of the corresponding $\mu_{i'}$s.
Using a suitable convex combination for each $i \in X_d(t+1)$, we show that this inequality implies that
\begin{equation}\label{eq:estimate_main}
\sum_{i \in X_d(t+1)} \lambda_i \geq \frac{t+d}{t+1} \sum_{i \in  X_d(t)} \mu_i.
\end{equation}

\begin{figure}[ht]
\begin{center}
\includegraphics[width=.45\textwidth]{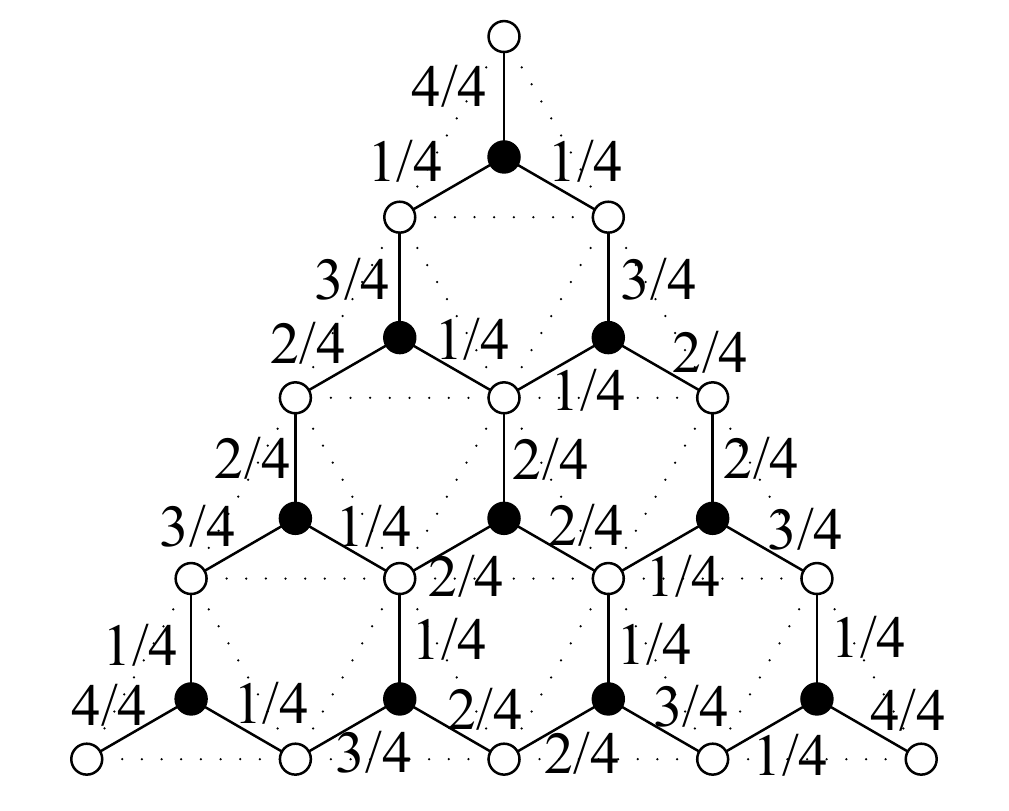}
\caption{Illustration on choosing the weights if $d=3$ and $t=3$. The black and empty dots represent the elements of the set $X_3(3)$ and $X_3(4)$, respectively. Dots illustrating adjacent indices are connected by a segment. The weight assigned to the segment connecting the dots representing $i$ and $i'$ is equal to $\alpha_{ii'}$.}
\label{fig:layers}
\end{center}
\end{figure}

Consider some $i=(i_1,i_2,\ldots,i_d) \in X_d(t+1)$. Then the indices in $X_d(t)$ adjacent to $i$ are all of the form $i-p_j$ for some  $j=1,2,\ldots,d$. Furthermore,
$i-p_j$ is adjacent to $i$ iff $i_j \geq 1$, or in other words, iff $i_j \neq 0$. Now, for any $i' \in X_d(t)$ adjacent to $i$ we set $\alpha_{ii'} = \frac{i_j}{t+1}$,
where $i-i'=p_j$ (cf. Figure~\ref{fig:layers}).  Then, since $i \in X_d(t+1)$, we clearly have $1 = \sum_{j=1}^d \frac{i_j}{t+1} = \sum_{i' \in X_d(t), ii' \in I} \alpha_{ii'}$.
Thus, by (\ref{eq:estimate1}), we have
\begin{equation}\label{eq:estimate2}
\lambda_i \geq \sum_{i' \in X_d(t), ii' \in I} \alpha_{ii'} \mu_{i'}
\end{equation}
for all $i \in X_d(t+1)$. 
Now, let $i' \in X_d(t)$, and $i'=(i'_1,i'_2,\ldots,i'_d)$. Then the indices in $X_d(t+1)$ adjacent to $i'$ are exactly those of the form $i'+p_j$ for some $i=1,2,\ldots,d$.
Hence,
\begin{equation}\label{eq:estimate3}
\sum_{i \in X_d(t+1), ii' \in I} \alpha_{ii'} = \sum_{j=1}^d \frac{i'_j+1}{t+1} = \frac{t+d}{t+1}.
\end{equation}
Finally, by (\ref{eq:estimate2}) and (\ref{eq:estimate3})
\begin{align*}
\sum_{i \in X_d(t+1)} \lambda_i \geq \sum_{i \in X_d(t+1)} \sum_{i' \in X_d(t), ii' \in I} \alpha_{ii'} \mu_{i'} &= \sum_{i' \in X_d(t)} \left( \sum_{i \in X_d(t+1), ii' \in I} \alpha_{ii'} \right) \mu_{i'} \\
& = \frac{t+d}{t+1} \sum_{i' \in X_d(t)} \mu_{i'}.
\end{align*}

Using this inequality and the assumption that $B[k] \subseteq M \subseteq k \conv (B)$, we obtain
\[
\vol(M+B) \geq (k+1)^{\underline{d}}V + \sum_{t=k-d+1}^{k-1} \frac{t+d}{t+1} \sum_{i \in X_d(t)} \mu_i .
\]
Note that the sequence $\left\{ \frac{t+d}{t+1} \right\}$, where  $t=0,1,2,\ldots$, is strictly decreasing.
Hence, using the fact that if $i \in X_d(t)$, then $\mu_i \leq V(k-t)$,  one has, for $k-d+1\le t\le k-1$,
\begin{align*}
\frac{t+d}{t+1} \sum_{i \in X_d(t)} \mu_i &\ge \frac{k+1}{k-d+2} \sum_{i \in X_d(t)}\mu_i+\left(\frac{t+d}{t+1} -\frac{k+1}{k-d+2}\right) V(k-t) N_d(t)\\
&\ge \frac{k+1}{k-d+2}\left(\sum_{i \in X_d(t)}\mu_i-V(k-t) N_d(t)\right)+\frac{t+d}{t+1}V(k-t) N_d(t).
\end{align*}
Hence
\begin{multline}\label{eq:important}
\vol(M+B) \geq (k+1)^{\underline{d}}V +\frac{k+1}{k-d+2}\sum_{t=k-d+1}^{k-1}\left(\sum_{i \in X_d(t)}\mu_i-V(k-t) N_d(t)\right)\\
+ \sum_{t=k-d+1}^{k-1}\frac{t+d}{t+1}V(k-t) N_d(t).
\end{multline}
Observe that  $\sum_{t=k-d+1}^{k-1} V(k-t) N_d(t)= (k^d - k^{\underline{d}})V$, since it is the volume of the part of $k \conv (B)$ belonging to the cells that are not contained in  $k \conv (B)$, and the equality follows  by (\ref{eq:containedcells}). Similarly, since 
 \[
 \frac{t+d}{t+1}N_d(t) = \frac{t+d}{t+1} \binom{t+d-1}{d-1} =\binom{t+d}{d-1}=N_d(t+1)
\]
we deduce that
\[
\sum_{t=k-d+1}^{k-1} \frac{t+d}{t+1} V(k-t) N_d(t) 
= \sum_{t'=k-d+2}^{k} V(k+1-t') N_d(t')
 =((k+1)^d - (k+1)^{\underline{d}})V,
\]
since it is the volume of the part of $(k+1) \conv (B)$ belonging to cells that are not contained in  $(k+1) \conv (B)$. 
Substituting these into (\ref{eq:important}) and using (\ref{eq:kconvB}), we obtain
\begin{align*}
\vol(M+B)  &\geq (k+1)^{\underline{d}V  +  \frac{k+1}{k-d+2} \left( \vol(M) - k^dV \right) 
 +   ((k+1)^d - (k+1)^{\underline{d}})V} \\
 &\ge \frac{k+1}{k-d+2} \vol(M) +  \left( (k+1)^d - \frac{k+1}{k-d+2} k^d \right) V.
\end{align*}
Thus,
\begin{multline}\label{eq:lemma_final}
\vol\left( \frac{1}{k+1}(M+B) \right) \geq \frac{k^d}{(k-d+2) (k+1)^{d-1}} \vol\left( \frac{1}{k} M \right) \\+ \left(1 -  \frac{k^d}{(k-d+2) (k+1)^{d-1}} \right) V.
\end{multline}
Since $\vol\left( \frac{1}{k} M \right) \leq V$,  to prove the first inequality of the lemma,  it is sufficient to show that the right-hand side of (\ref{eq:lemma_final}) is a convex combination of the volumes, namely that the second coefficient is nonnegative. This is clear if $d=2$, while for $d \geq 3$ using
 the Binomial Theorem, one has
\begin{align*}
(k-d+2) (k+1)^{d-1} - k^d &>  (k-d+2) \left( k^{d-1} + (d-1) k^{d-2}\right) - k^d \\ &= k^{d-1} - (d-1)(d-2) k^{d-2},
\end{align*}
which is nonnegative for $k \geq (d-1)(d-2)$.

Now we prove the equality case. By (\ref{eq:lemma_final}), equality in the lemma implies that $ \vol\left( \frac{1}{k} M \right) = V$, or equivalently,
$\vol(k \conv (B) \setminus M) = 0$. Note that since 
$$\vol(k \conv(B)) > 0,$$
 its interior is not empty. Thus, $k \conv (B)$ is equal to the closure of its interior.
On the other hand, $\vol(k \conv (B) \setminus M) = 0$ implies that $\inter (k \conv B) \subset M$, but as $M$ is compact, $M=k \conv B$ follows.

Finally, if $\vol\left( \frac{1}{k+1}(M+B)\right) - \delta \leq \vol\left( \frac{1}{k} M \right)$, then in the same way  (\ref{eq:lemma_final}) yields the inequality
$\vol(M) \geq \vol(k \conv (B)) - C(d,k) \delta$, with 
\begin{equation}\label{eq:ckd}
C(d,k)=\frac{k^d}{1- \frac{k^d}{(k-d+2) (k+1)^{d-1}}}.
\end{equation}
\end{proof}


\begin{proof}[Proof of Theorem~\ref{thm:main}]
Without loss of generality, we may assume that $S$ is star-shaped with respect to the origin.
Let $\varepsilon > 0$ be an arbitrary positive number. By Carath\'eodory's theorem, we may choose a finite point set $A_0 \subset S$ such that $\vol(\conv(S)) - \varepsilon \leq \vol(\conv(A_0))$, and without loss of generality, we may assume that the points of $A_0$ are in convex position.
Clearly, the star-shaped set $A = \bigcup_{a \in A_0} [o,a]$ is a subset of $S$, satisfying $\vol(\conv(S)) - \varepsilon \leq \vol(\conv(A))$.
Consider a simplicial decomposition $\mathcal{F}$ of the boundary of $\conv (A)$ such that all vertices of $\mathcal{F}$ are vertices of $\conv(A)$.
Let the $(d-1)$-dimensional faces of $\mathcal{F}$ be $F_1, F_2, \ldots, F_m$, and for $j=1,2,\ldots, m$, let $B_j = \bigcup_{t=1}^d [o,p_t^j]$, where
$p_1^j, p_2^j, \ldots, p_d^j$ are the vertices of $F_j$. Then $B_j \subseteq S$ for all values of $j$, the sets $\conv (B_j)$ are mutually non-overlapping,
and $\conv(A) = \bigcup_{j=1}^m \conv(B_j)$.
Finally, let $M_j = S[k] \cap (k \conv (B_j))$. Then, since $B_j \subseteq S$, we have $B_j[k] \subseteq M_j \subseteq (k \conv(B_j))$.
Thus, Lemma~\ref{lem:main} implies that $\vol\left(\frac{1}{k+1}(M_j+B_j) \right) \geq \vol \left( \frac{1}{k} M_j \right)$. 
Thus, we have
\begin{equation*}
\begin{split}
\vol\left(\frac{S[k]}{k} \cap \conv(A)\right) =& \sum_{j=1}^m\vol\left(\frac{S[k]}{k} \cap \conv(B_j)\right)=
\sum_{j=1}^m \vol \left( \frac{M_j}{k} \right) \\
\leq & \sum_{j=1}^m \vol \left( \frac{M_j+B_j}{k+1} \right)\leq \vol\left(\frac{S[k+1]}{k+1}\right) .
\end{split}
\end{equation*}
On the other hand, since $0 \leq \vol(\conv(S))-\vol(\conv (A)) \leq \varepsilon$, we have 
\[
0 < \vol\left( \frac{S[k]}{k}\right)-\vol\left(\conv(A) \right) \leq \varepsilon,
\]implying that
 \begin{equation}\label{eq:equality}
\vol\left( \frac{S[k]}{k}  \right) - \varepsilon \leq \sum_{j=1}^m \vol \left( \frac{M_j}{k} \right) 
\leq  \sum_{j=1}^m \vol \left( \frac{M_j+B_j}{k+1} \right)\leq \vol\left(\frac{S[k+1]}{k+1}\right).
\end{equation}
This inequality is satisfied for all positive $\varepsilon$, and thus, the inequality part of Theorem~\ref{thm:main} holds.

Now, assume that 
\[
\vol\left( \frac{S[k]}{k}  \right) = \vol\left( \frac{S[k+1]}{k+1} \right),
\]
 and that $\dim(S)=d$. Then, from inequality (\ref{eq:equality}) we deduce that 
\[
\sum_{j=1}^m \left( \vol \left( \frac{M_j +B_j}{k+1}  \right) - \vol \left( \frac{M_j }{k} \right) \right) \leq \varepsilon.
\]
For $j=1,2,\ldots,m$, set $\delta_j = \vol \left( \frac{1}{k+1} (M_j +B_j) \right) - \vol \left( \frac{1}{k} M_j \right)$.
Then, clearly $\sum \delta_j \leq  \varepsilon$. On the other hand, by Lemma~\ref{lem:main}, 
for every $j=1,2,\ldots,m$, we have $\vol(k \conv B_j ) - \vol(M_j) \leq C(k,d) \delta_j$,  where $C(k,d)$ is defined in (\ref{eq:ckd}).
Thus, summing on $j$, it follows that
$$
 \varepsilon C(k,d)\geq \vol(\conv (kA)) - \vol (S[k] \cap \conv(kA)),
$$ 
implying that
$\varepsilon \left( k^d + C(k,d) \right) \geq \vol(\conv(kS)) - \vol(S[k])$.
This inequality holds for any value $\varepsilon > 0$, and hence, $\vol(\conv(S)) = \vol\left( \frac{1}{k} S[k] \right)$, or equivalently,
$\vol \left( \conv(S) \setminus \frac{1}{k} S[k] \right) = 0$. Since $\conv(S)$ is a compact, convex set with nonempty interior, and $\frac{1}{k} S[k]$ is compact, to show the equality $\conv(S) = \frac{1}{k} S[k]$, we may apply the argument at the end of the proof of Lemma~\ref{lem:main}.
\end{proof}

\section{Conjecture~\ref{conj:BMW} for connected sets}\label{sec3}

In the first few lemmata we collect some elementary properties of the Minkowski sum of connected sets.
Throughout this section, $e_1, e_2$ denotes the elements of the standard orthonormal basis of $\Re^2$.

\begin{lem}\label{lem:boundary}
Let $A \subset \Re^d$ be a compact set with a connected boundary and let  $\bd A \subseteq B \subseteq A$. Then $B + B=A+A$.
\end{lem}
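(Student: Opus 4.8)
The plan is to prove the two inclusions $B+B\subseteq A+A$ and $A+A\subseteq B+B$ separately, the first being immediate from $B\subseteq A$. For the reverse inclusion, fix $a_1,a_2\in A$; I want to write $a_1+a_2=b_1+b_2$ with $b_1,b_2\in B$. Since $\bd A\subseteq B$, it suffices to find a single point $b\in B$ with $b\le$ reachable, i.e. such that $a_1+a_2-b\in B$ as well. The natural idea is to look at the segment (or more robustly, a path) inside $A$ and exploit a connectedness/continuity argument: consider the function that measures, along the boundary, whether a translate lands inside $A$.

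More concretely, here is the mechanism I expect to use. Given $a_1,a_2\in A$, consider the set $T=\{x\in A:\ a_1+a_2-x\in A\}$; this is nonempty (it contains $a_1$ and $a_2$), closed, and symmetric about the midpoint $\tfrac{a_1+a_2}{2}$ under the map $x\mapsto a_1+a_2-x$. If $T$ meets $\bd A$, we are done, because then there is $b_1\in\bd A\subseteq B$ with $b_2:=a_1+a_2-b_1\in A$; but $b_2$ need not lie in $B$. To fix this, I would instead argue that one can \emph{slide} a point to the boundary: starting from $x=a_1$ with $a_1+a_2-x=a_2\in A$, move $x$ continuously within $A$; since $A$ is bounded, the pair cannot stay in the interior of $A$ indefinitely, so at some stage \emph{one} of the two points $x$ or $a_1+a_2-x$ reaches $\bd A$. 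By the symmetry of the configuration we may assume it is $x$ that hits $\bd A$, giving $x\in\bd A\subseteq B$; and now we need $a_1+a_2-x\in B$ as well — so we repeat the sliding on the \emph{other} coordinate, keeping $x\in\bd A$ fixed is not possible in general, so the cleaner route is: first bring $x$ to $\bd A$, then among all such representations choose one; here the hypothesis that $\bd A$ is \emph{connected} enters, allowing us to move $x$ along $\bd A$ (where it automatically lies in $B$) until the partner $a_1+a_2-x$ also crosses into $\bd A\subseteq B$.

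I expect the main obstacle to be exactly this last point: ensuring that \emph{both} endpoints land in $B$ simultaneously, rather than just one of them. The connectedness of $\bd A$ is clearly the crucial hypothesis — it should be used to run a connectedness argument on the set $\{x\in\bd A:\ a_1+a_2-x\in A\}$ versus $\{x\in\bd A:\ a_1+a_2-x\in \inter A\}$ and $\{x\in\bd A:\ a_1+a_2-x\notin A\}$: the first is a relatively closed nonempty subset of $\bd A$, and if the partner never touched $\bd A$ we would get a partition of the connected set $\bd A$ into the (relatively open) set where the partner is in $\inter A$ and the (relatively open in $\bd A$) set where the partner is outside $A$, contradicting connectedness unless one of these is empty; ruling out "partner always inside $A$" uses boundedness (translate $\bd A$ by a large vector in a suitable direction). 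Once the argument produces $x\in\bd A$ with $a_1+a_2-x\in\bd A$, both points lie in $B$ and we are done.

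A subtle edge case is when $\inter A=\emptyset$ or $A$ is degenerate; there $\bd A = A$, so $B=A$ and the statement is trivial, so we may assume $A$ has nonempty interior, which makes the open/closed decomposition of $\bd A$ above meaningful. I would also be careful that $A+A$ is compact (it is, as the continuous image of $A\times A$), so all the sets involved are well-behaved and the limiting/continuity arguments are legitimate. Modulo handling these degenerate cases at the start, the proof should reduce to the connectedness dichotomy on $\bd A$ described above.
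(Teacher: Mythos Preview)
Your strategy matches the paper's: reduce to showing $A+A\subseteq\bd A+\bd A$, fix $p=\frac{a_1+a_2}{2}$, and look for $q\in\bd A$ with $\chi_p(q):=2p-q\in\bd A$ via a connectedness argument on $\bd A$. The paper packages this with the continuous function $f_p:\bd A\to\Re$, $f_p(x)=$ signed distance of $\chi_p(x)$ to $\bd A$, and uses the intermediate value property on the connected set $\bd A$; your open/closed decomposition of $\bd A$ according to where $\chi_p(x)$ lands is the same argument in different clothing.

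There are, however, two soft spots in your sketch where the paper's proof is more careful and which you should tighten. First, the step ``rule out `partner always inside $A$' by boundedness (translate $\bd A$ by a large vector)'' is not a valid argument as stated---nothing is being translated, and the set $\{x\in\bd A:\chi_p(x)\notin A\}$ can genuinely be empty (take $A$ a Euclidean ball and $p$ its center). What actually works, and what the paper does, is to pick $x_1\in\bd A$ at \emph{maximal} distance from $p$; then $|\chi_p(x_1)-p|=|x_1-p|$ is also maximal, so if $\chi_p(x_1)\in A$ it cannot lie in $\inter A$ and hence $\chi_p(x_1)\in\bd A$, finishing the proof outright. Thus either you are done immediately, or $f_p(x_1)>0$, i.e.\ the ``partner outside'' set is nonempty. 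Second, your ``sliding'' argument to produce some $x_2\in\bd A$ with $\chi_p(x_2)\in A$ needs a concrete path: ``move $x$ continuously within $A$'' plus boundedness does not by itself force you to hit $\bd A$. The paper moves along the straight line $L$ through $a_1,p,a_2$ and takes $y',y''\in L\cap\bd A$ closest to $a_1,a_2$ respectively; comparing the two gaps $|y'-a_1|$ and $|y''-a_2|$ yields a point of $\bd A$ whose reflection about $p$ still lies on the segment $[a_1,a_2]\subset A$. With these two fixes in place your outline becomes a complete proof, identical in substance to the paper's.
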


\begin{proof}
We have $\bd A + \bd A \subseteq B+B \subseteq A+A$. Thus it is sufficient to prove that $\bd A + \bd A=A+A$.
Clearly, $A + A \supseteq \bd A + \bd A$. We show that $\frac{A + A}{2} \subseteq \frac{\bd A + \bd A}{2}$, which then yields the assertion.
Consider a point $p \in \frac{A + A}{2}$. Then $p$ is the midpoint of a segment whose endpoints are points of $A$.
Let $\chi_p : \Re^d \to \Re^d$ be the reflection about $p$  defined by $\chi_p(x)=2p-x$, for $x\in\Re^d$. To prove that $p \in \frac{\bd A + \bd A}{2}$ we need to show that for some $q \in \bd A$, we have $\chi_p(q) \in \bd A$. To do this, let us define $f_p(x)$ ($x \in \Re^d$) as the signed distance of $\chi_p(x)$ from the boundary of $A$, where the sign is positive if $\chi_p(x) \notin A$, and not positive if $\chi_p(x) \in A$. Here we remark that since $A$ is compact, $\bd A$ is compact as well.
Let $x_1$ be a point of $\bd A$ farthest from $p$. If $\chi_p(x_1) \in A$ then $\chi_p(x_1) \in \bd A$, and we are done. Thus, assume that $\chi_p(x_1) \notin A$, implying that $f_p(x_1) > 0$.
Now, since $p \in \frac{A + A}{2}$, we have some $y \in A$ such that $\chi_p(y) \in A$. Let $L$ be the line through $y$, $p$ and $\chi_p(y)$. Let $y'$ and $y''$ be points of $L \cap \bd A$ closest to $y$ and $\chi_p(y)$, respectively.  Then the segments $[y,y']$ and $[\chi_p(y),y'']$ are included in $A$. If $0 < |y'-y| \leq |y''-\chi_p(y)|$, then $y' \in \bd A$ and $\chi_p(y') \in  [\chi_p(y),y'']\subset  A$. If $0 < |y''-\chi_p(y)| \leq |y'-y|$, then the same holds for $y''$ in place of $y'$. Thus, it follows that for some point $x_2 \in \bd A$, $\chi_p(x_2) \in A$. If $\chi_p(x_2) \in \bd A$, then we are done, and so we may assume that $\chi_p(x_2) \in \inter A$, which yields that $f_p(x_2) < 0$.

We have shown that $f_p : \bd A \to \Re$ attains both a positive and a negative value on its domain. On the other hand, since $f$ is continuous and $\bd A$ is connected, $f_p(q) = 0$ for some $q \in \bd A$, from which the assertion readily follows.
\end{proof}

\begin{rem}
Lemma~\ref{lem:boundary} holds also for the boundary of the external connected component of $\Re^d \setminus A$ in place of $\bd A$.
\end{rem}

\begin{rem}
We note that the equality $A_1 + A_2 = \bd A_1 + \bd A_2$ does not hold in general for different compact sets $A_1, A_2$ with connected boundaries. To show it, one may consider the sets $A_1 = B_2^2$ and $A_2 = \varepsilon B_2^2$ for some sufficiently small value of $\varepsilon$, where  $B_2^d$ be the Euclidean unit ball of dimension $d$ centered at the origin.

\end{rem}

\begin{rem}
Lemma~\ref{lem:boundary} does not hold if we omit the condition that $\bd A$ is connected. To show it, we may choose $A$ as the union of $B_2^2$ and a singleton $\{ p \}$ with $|p|$ being sufficiently large.
\end{rem}

\begin{cor}\label{cor:ksumboundary}
If $A$ is a compact set with a connected boundary then $A + A = A + \bd A = \bd A + \bd A$. Thus, for any positive integer $k \geq 2$, we have  $A[k] = \bd A[k]$.
\end{cor}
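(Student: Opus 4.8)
The plan is to derive the whole statement from Lemma~\ref{lem:boundary}; there is no new idea needed, only bookkeeping. First I would apply Lemma~\ref{lem:boundary} with the choice $B = \bd A$, which is legitimate because $A$ has connected boundary and trivially $\bd A \subseteq \bd A \subseteq A$; this yields $\bd A + \bd A = A + A$. Then I would use the sandwich $\bd A + \bd A \subseteq A + \bd A \subseteq A + A$, whose two outer terms have just been shown to coincide, to conclude $A + A = A + \bd A = \bd A + \bd A$, which is the first assertion.

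For the claim $A[k] = \bd A[k]$ with $k \geq 2$, I would argue by induction on $k$. The base case $k = 2$ is exactly the identity $A + A = \bd A + \bd A$ obtained above. For $k \geq 3$, using the induction hypothesis at $k-1$, then splitting off one summand, and then the equality $A + \bd A = \bd A + \bd A$ from the first part, I would write
\[
A[k] \;=\; A + A[k-1] \;=\; A + \bd A[k-1] \;=\; (A + \bd A) + \bd A[k-2] \;=\; (\bd A + \bd A) + \bd A[k-2] \;=\; \bd A[k].
\]
Apart from the induction hypothesis and the appeal to the first part, every equality here is just commutativity and associativity of Minkowski addition, so no further topological input is required.

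The only point needing a little care — and the closest thing to an obstacle — is index bookkeeping: one must make sure every symbol $A[j]$ or $\bd A[j]$ that appears has $j \geq 1$ (in particular, in the case $k = 3$ the term $\bd A[k-2]$ is simply $\bd A$, which should be noted separately), and that the induction hypothesis is invoked only for exponents in $\{2,\dots,k-1\}$. If one prefers a slightly more uniform route, one can first prove the auxiliary identity $A + \bd A[j] = A[j+1]$ for all $j \ge 1$ by a parallel induction starting from $A + \bd A = A + A$, and then combine it with the same reduction; but the direct computation above is the most economical.
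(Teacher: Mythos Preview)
Your proposal is correct and follows exactly the approach the paper intends: the corollary is stated without proof precisely because it is an immediate consequence of Lemma~\ref{lem:boundary}, and you have spelled out the two obvious steps (the sandwich for $A+\bd A$, and the induction on $k$) cleanly. There is nothing to add.
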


\begin{cor}
 Let $d\ge2$ and $k \geq  \max\{ 2, (d-1)(d-2) \}$. Let $A$ be a compact set such that $\partial S \subseteq A \subseteq S$ for some compact, star-shaped set $S \subset \Re^d$. Then we have
\[
\vol\left( \frac{1}{k}A[k]\right) \leq \vol\left( \frac{1}{k+1}A[k+1]\right).
\]          
\end{cor}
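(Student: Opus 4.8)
The plan is to deduce the corollary from Theorem~\ref{thm:main} by showing that the hypothesis $\bd S \subseteq A \subseteq S$ already forces $A[m] = S[m]$ for every integer $m \ge 2$. The mechanism is Corollary~\ref{cor:ksumboundary}: granting that a compact star-shaped set has connected boundary, that corollary applied to $S$ gives $S[m] = (\bd S)[m]$ for all $m \ge 2$. Since Minkowski summation is monotone under inclusion, from $\bd S \subseteq A \subseteq S$ we then get
\[
(\bd S)[m] \subseteq A[m] \subseteq S[m] = (\bd S)[m],
\]
hence $A[m] = S[m]$; in particular $A[k] = S[k]$ and $A[k+1] = S[k+1]$, as $k \ge 2$. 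Because $k \ge \max\{2,(d-1)(d-2)\}$, Theorem~\ref{thm:main} applies to the star-shaped set $S$ and yields $\vol\big(\frac{1}{k+1}S[k+1]\big) \ge \vol\big(\frac{1}{k}S[k]\big)$; substituting the two equalities gives the claim.

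So the whole proof reduces to the fact that a compact star-shaped set $S \subseteq \Re^d$ with $d \ge 2$ has connected boundary. If $\inter S = \emptyset$ (in particular if $\dim S < d$) this is trivial, since then $\bd S = S$, which is path-connected as a union of segments through the star center $o$; note too that in this case $\bd S \subseteq A \subseteq S = \bd S$ forces $A = S$ outright, so the corollary is just Theorem~\ref{thm:main}. When $\inter S \neq \emptyset$ and $o \in \inter S$, I would use radial projection. Since $S$ is star-shaped with respect to $o$, for any ball $B \subseteq S$ one has $\conv(\{o\} \cup B) = \bigcup_{b \in B}[o,b] \subseteq S$; consequently, along any ray from $o$ the points of $S$ that do not lie in $\inter S$ form a closed segment terminating at the radial boundary point in that direction. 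Hence each fibre of the continuous radial projection $\pi : \bd S \to \Sph^{d-1}$, $\pi(x) = (x-o)/|x-o|$, is a connected segment, and $\pi$ is a closed surjection of the compact set $\bd S$ onto the connected sphere $\Sph^{d-1}$; were $\bd S = F_1 \sqcup F_2$ a separation, each $F_i$ would be a union of fibres, so $\pi(F_1)$ and $\pi(F_2)$ would be disjoint nonempty closed sets covering $\Sph^{d-1}$ — impossible for $d \ge 2$. Thus $\bd S$ is connected.

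The one genuinely delicate point, and the step I expect to be the main obstacle, is the case $\inter S \neq \emptyset$ in which the star center is forced to lie on $\bd S$ (this can really occur, for instance when the kernel of $S$ is a segment that meets $\inter S$ only along its relative boundary). Here the radial projection still has connected fibres, but its image need not be all of $\Sph^{d-1}$ nor connected, so one must instead exploit that every segment $[o,x]$ with $x \in \bd S$ meets $\bd S$ in $\{o\}$ together with a subsegment terminating at $x$, and argue that these subsegments, possibly after passing through short arcs of $\bd S$ near the radial directions, all link up through $o$. Making this last argument rigorous is the crux; the remainder is a purely formal consequence of Corollary~\ref{cor:ksumboundary} and Theorem~\ref{thm:main}.
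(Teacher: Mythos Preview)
Your overall plan is the same as the paper's: reduce to Theorem~\ref{thm:main} via Corollary~\ref{cor:ksumboundary}, after arranging that the boundary in question is connected. The difference---and the genuine gap you yourself flag---is in how you secure that connectedness. You attempt to prove that $\bd S$ is connected for \emph{every} compact star-shaped $S$. Your argument when a star center lies in $\inter S$ is correct (the fibres of the radial projection really are segments, by exactly the cone argument you give), but the case $o\in\bd S$ with $\inter S\neq\emptyset$ is left unfinished, and this case is not vacuous: for instance $S=[0,1]^2\cup[-1,0]^2$ is star-shaped only with respect to the origin, which is a boundary point. So as written your proof is incomplete at precisely the point you identified.

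The paper sidesteps this difficulty rather than confronting it. Instead of proving $\bd S$ connected, it thickens everything: set $S'=S+\varepsilon B_2^d$ and $A'=A+\varepsilon B_2^d$. Then $o\in\inter S'$ automatically, and in fact each ray from $o$ meets $\bd S'$ in a single point, so $\bd S'$ is homeomorphic to $\Sph^{d-1}$ and certainly connected. One checks $\bd S'\subseteq\bd S+\varepsilon\Sph^{d-1}\subseteq A'\subseteq S'$, so Corollary~\ref{cor:ksumboundary} applies to $S'$ and gives $A'[m]=S'[m]$ for $m\ge2$; Theorem~\ref{thm:main} applied to the star-shaped set $S'$ then yields the inequality for $A'$, and letting $\varepsilon\to0^+$ (using continuity of $t\mapsto\vol(C+tB_2^d)$) gives it for $A$. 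The thickening buys exactly what you were missing: it forces the star center into the interior, so the delicate boundary case never arises.
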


\begin{proof} 
Without loss of generality, we may assume that $S$ is star-shaped with respect to the origin.
Set $S'=S + \varepsilon B_2^d$ for some small value $\varepsilon > 0$.

First, we show that $\partial S'$ is path-connected. Let $L$ be a ray starting at $o$. Since $o \in \inter S'$, $L \cap \bd S' \neq \emptyset$.
Let $p \in L \cap  \bd S'$. Then there is a point $q \in S$ such that $|q-p| = \varepsilon$. Now, if $x$ is any relative interior point of $[o,q]$, then the line through $x$ and parallel to $[p,q]$ intersects $[o,q]$ at a point at distance less than $\varepsilon$ from $x$. Since $[o,q] \subseteq S$, from this it follows that $x \in S + \varepsilon \inter B_2^d \subseteq \inter S'$. In other words, for any $p \in \bd S'$, all points of $[o,p]$ but $p$ lie in $\inter S'$. Thus, $L \cap \bd S'$ is a singleton for any ray $L$ starting at $o$.

Let $0 < r < R$ such that $\bd S' \subset H = R B_2^d \setminus (r \inter B_2^d$).
Let $P : H \to \Sph^{d-1}$ be the central projection to $\Sph^{d-1}$. Note that $P$ is Lipschitz, and thus continuous on $H$, and its restriction $P|_{\bd S'}$ to $\bd S'$ is bijective. On the other hand, since $\bd S'$ (as also $S'$) are compact, this implies that the inverse of $P|_{\bd S'}$ is continuous, that is, $\bd S'$ and $\Sph^{d-1}$ are homeomorphic. Thus, $\bd S'$ is path-connected.

On the other hand, $\bd S \subseteq A \subseteq S$ implies that $A'=A + \varepsilon B_2^d \subseteq S'$, and $\bd S' \subseteq \bd S + \varepsilon \Sph^{d-1} \subseteq \bd S + \varepsilon B_2^d \subseteq A'$. Now, we may apply Lemma~\ref{lem:boundary} and Corollary~\ref{cor:ksumboundary}, and obtain that for any value of  $k \geq 2$, $A'[k]=S'[k]$.
Thus, by Theorem~\ref{thm:main} it follows that 
\[
\vol\left(\frac{A[k]}{k}  + \varepsilon B_2^d \right)=\vol\left(\frac{A'[k]}{k} \right) \leq \vol\left(\frac{A'[k+1]}{k+1}  \right) =\vol\left(\frac{A[k+1]}{k+1}  + \varepsilon B_2^d \right).
\] 
 On the other hand, for any compact set $C$ the function $t\mapsto \vol(C+tB_2^d)$ is continuous on $[0,+\infty)$, see for example \cite{FM}, hence  $\lim_{\varepsilon \to 0^+} \vol\left(\frac{1}{m} A[m] + \varepsilon B_2^d \right) = \vol\left(\frac{1}{m} A[m] \right)$, for any integer $m$ which implies the corollary.
\end{proof}

Let us denote the closure of a set $A \subset \Re^d$ by $\cl(A)$.

\begin{prop}\label{lem:simple_curve}
Let $\gamma \subset \Re^2$ be a simple continuous curve connecting $o$ and $e_1$ such that its intersection with the $x$-axis is $\{ o,e_1\}$.
Let $D$ be the interior of the closed Jordan curve $\gamma \cup [o,e_1]$. For $i=0,1$, let $\gamma_i = \frac{i}{2} e_1 + \frac{1}{2} \gamma$, and $D_i = \frac{i}{2} e_1 + \frac{1}{2} D$.
Then $\cl \left(D \setminus (D_0 \Delta D_1) \right) \subseteq \frac{1}{2} \gamma[2]$, where $\Delta$ denotes symmetric difference.
\end{prop}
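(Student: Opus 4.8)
The plan is to show that every point $p \in D$ which lies in neither $D_0 \setminus D_1$ nor $D_1 \setminus D_0$ can be written as $\frac{1}{2}(x+y)$ with $x, y \in \gamma$; by continuity this will suffice to establish the claimed inclusion after passing to the closure. So fix such a $p$. The key observation is that $p$ being outside the symmetric difference means that $p$ belongs to $D_0 \cap D_1$ or to the complement of $D_0 \cup D_1$; in either case the ``parity'' of how the horizontal line through $p$ meets the two half-scaled regions is controlled. Concretely, I would slice by the horizontal line $\ell$ through $p$: since $\gamma$ meets the $x$-axis only at $o$ and $e_1$, the region $D$ lies in an open horizontal strip $0 < x_2 < h$ (after orienting $\gamma$ upward, say), and for a generic height the line $\ell$ meets $\gamma$ in an even number of points, cutting out a union of horizontal chords of $D$.

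First I would set up the one-dimensional picture on $\ell$. Write $\ell \cap D$ as a finite union of open intervals (chords) with endpoints on $\gamma$, and similarly $\ell \cap D_i$ as the image of $\ell' \cap D$ under $x \mapsto \frac{i}{2}e_1 + \frac{1}{2}x$ where $\ell'$ is the horizontal line at twice the height. The heart of the matter is then a purely one-dimensional sweeping/parity argument: I want to produce $x \in \gamma$ and $y \in \gamma$ with $\frac{x+y}{2} = p$, equivalently a point $x \in \gamma$ such that $\chi_p(x) = 2p - x$ also lies on $\gamma$, where $\chi_p$ is the reflection through $p$ — exactly the device used in the proof of Lemma~\ref{lem:boundary}. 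Define, for each point $x$ of $\gamma$ lying on the correct side, the signed ``defect'' measuring whether $2p-x$ is inside, outside, or on $\gamma \cup [o,e_1]$; as $x$ runs along $\gamma$ this defect is continuous, and the hypothesis that $p \notin D_0 \Delta D_1$ forces it to take both signs (or vanish outright), so an intermediate value argument yields a zero, i.e. a point $x \in \gamma$ with $2p - x \in \gamma \cup [o,e_1]$. One must then rule out the spurious possibility $2p - x \in [o,e_1] \setminus \{o,e_1\}$: this is where $p \notin D_0 \Delta D_1$ is used a second time, since if $2p-x$ were an interior point of the base segment then $p$ would be the midpoint of a segment from $\gamma$ to the $x$-axis, which (tracking heights) places $p$ in exactly one of $D_0, D_1$.

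The main obstacle I anticipate is the careful bookkeeping of the two exceptional endpoints $o, e_1$ and of the base segment $[o,e_1]$: the curve $\gamma$ together with $[o,e_1]$ bounds $D$, and midpoints of chords from $\gamma$ to $[o,e_1]$ are genuinely present in $\frac{1}{2}D$ but generally \emph{not} in $\frac{1}{2}\gamma[2]$, so the whole point of the hypothesis ``$p \notin D_0 \Delta D_1$'' is to excise precisely those bad midpoints. Making the parity/sign argument robust at heights where $\ell$ is tangent to $\gamma$ or passes through a self-near-tangency is the delicate part; I would handle it by first proving the inclusion for $p$ in the open set $D \setminus \cl(D_0 \Delta D_1)$ at generic heights, and then take closures at the end, using compactness of $\gamma$ to pass to the limit (so that a convergent sequence $\frac{1}{2}(x_n + y_n) \to p$ with $x_n, y_n \in \gamma$ gives $p = \frac{1}{2}(x+y)$ with $x, y \in \gamma$). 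An alternative, possibly cleaner route avoiding height-by-height analysis: apply Lemma~\ref{lem:boundary} directly to the compact set $A = \cl(D)$, whose boundary $\gamma \cup [o,e_1]$ is connected, to get $\frac{1}{2}(A + A) = \frac{1}{2}(\bd A + \bd A)$; then separate the contribution of the base segment $[o,e_1]$ from that of $\gamma$ and check that the ``$[o,e_1]$-part'' of $\frac{1}{2}(A+A)$ is contained in $D_0 \cup D_1$, so that the complement in $D$ of $D_0 \Delta D_1$ must come from the $\gamma$-with-$\gamma$ part, i.e. from $\frac{1}{2}\gamma[2]$.
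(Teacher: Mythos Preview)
Your reflection device $\chi_p(x)=2p-x$ is exactly the tool the paper uses, and the overall shape (an intermediate-value argument along $\gamma$) is in the right spirit. But both of your routes have a genuine gap at the same step: ruling out the case where the relevant point lands on the base segment $[o,e_1]$ rather than on $\gamma$.

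Concretely, take $\gamma$ to be the upper semicircle from $o$ to $e_1$ and $p=(\tfrac12,\tfrac14)$. Then $p=\tfrac12\bigl((\tfrac12,\tfrac12)+(\tfrac12,0)\bigr)$ with $(\tfrac12,\tfrac12)\in\gamma$ and $(\tfrac12,0)$ an interior point of $[o,e_1]$, yet $p\notin D_0\cup D_1$: each $D_i$ is an open half-disk of radius $\tfrac14$, and $p$ is at distance $\sqrt{1/8}>\tfrac14$ from either center. So your ``tracking heights'' claim is false; a midpoint of a $\gamma$-to-$[o,e_1]$ chord need not lie in $D_0\cup D_1$, and hence your intermediate-value argument may produce $x\in\gamma$ with $\chi_p(x)\in[o,e_1]$ with no mechanism to upgrade this to $\chi_p(x)\in\gamma$. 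The same point $p$ kills the alternative route: it shows $\tfrac12(\gamma+[o,e_1])\not\subseteq\cl(D_0\cup D_1)$, so separating off the ``$[o,e_1]$-part'' of $\tfrac12(\bd A+\bd A)$ does not confine the remainder to $\tfrac12\gamma[2]$ on $D\setminus(D_0\Delta D_1)$. (Of course this particular $p$ \emph{is} in $\tfrac12\gamma[2]$, as the proposition asserts; the point is that neither of your arguments establishes it.)

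The idea you are missing is geometric rather than analytic: work in the horizontal strip $S$ bounded by the $x$-axis $L$ and its reflection $L_p=\chi_p(L)$. Since $[o,e_1]\subset L$ while $\chi_p([o,e_1])\subset L_p$, inside the open strip the only pieces of $\bd D$ and $\chi_p(\bd D)$ present are $\gamma$ and $\chi_p(\gamma)$ themselves, so any crossing produced there is automatically of the desired type. Coupling this with the equivalence $p\notin\cl D_0\iff\chi_p(o)=2p\notin\cl D$ (and likewise for $e_1$ and $D_1$), one forces $\gamma\cap\chi_p(\gamma)\neq\emptyset$ by a connectivity argument, splitting into the cases $\gamma\subset S$ and $\gamma\not\subset S$; the latter case requires a further analysis of the connected components of $\gamma\cap S$ containing $o$ and $e_1$.
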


\begin{proof}
For convenience, we assume that $\gamma$ lies in the half plane $\{ y \leq 0 \}$.
As in the proof of Lemma~\ref{lem:boundary}, let $\chi_p : \Re^2 \to \Re^2$ denote the reflection about $p \in \Re^2$   defined by $\chi_p(x)=2p-x$, and note that $p \in \frac{1}{2} \gamma[2]$ if and only if there is some point $q \in \gamma$ such that $\chi_p(q) \in \gamma$, or in other words, if $\gamma \cap \chi_p(\gamma) \neq \emptyset$. Let $L$ denote the $x$-axis, $L_p=\chi_p(L)$, and let $S$ be the infinite strip between $L$ and $L_p$ (cf. Figure~\ref{fig:cont_curve}).

\begin{figure}[ht]
\begin{center}
\includegraphics[width=.6\textwidth]{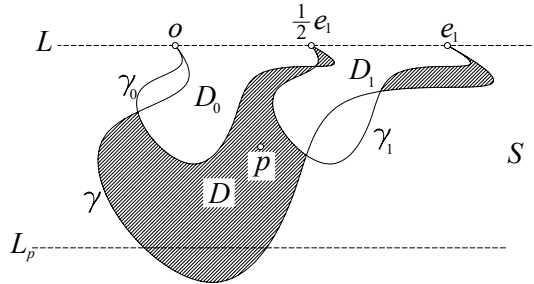}
\caption{An illustration for Proposition~\ref{lem:simple_curve}. The dashed region belongs to $\frac{1}{2} \gamma[2]$.}
\label{fig:cont_curve}
\end{center}
\end{figure}

First, observe that $o,e_1 \in \gamma$ yields that $\gamma_0 \cup \gamma_1 \subset \frac{1}{2} \gamma[2]$, and $\gamma \subset \frac{1}{2} \gamma[2]$ trivially holds. Thus, we need to show that if for some point $p$ we have $p \in D \setminus \cl (D_0 \cup D_1)$ or $p \in D_0 \cap D_1 \cap D$, then $p \in \frac{1}{2}\gamma[2]$. We do it only for the case $p \in D \setminus \cl (D_0 \cup D_1)$ since for the second case a similar argument can be applied.

Consider some point $p \in D \setminus (D_0 \cup D_1)$. Then $p \notin \cl (D_0 \cup D_1)$ yields that $\chi_p(o) = 2p \notin \cl D$, and the relation $\chi_p(e_1) \notin \cl D$ follows similarly.

\emph{Case 1}: $\gamma \subset S$. Note that in this case $\chi_p(\gamma) \subset S$.
Since $p \in D$ and $\chi_p(o) \notin \cl D$, $\bd D = \gamma \cup [o,e_1]$ and $[\chi_p(o),p] \cap [o,e_1] = \emptyset$, it follows by the continuity of $\gamma$ that $\gamma \cap [\chi_p(o),p] \neq \emptyset$. Hence, by the compactness of $\gamma$, there is a point $x \in \gamma \cap [\chi_p(o),p]$ closest to $p$.
By its choice, $\chi_p(x) \in D \cup \gamma$. If $\chi_p(x) \in \gamma$, we are done, and thus, we assume that $\chi_p(x) \in D$.
This implies that $\chi_p(\gamma)$ contains both interior and exterior points of $D$. On the other hand, since $\chi_p(\gamma) \subset S$, this implies that $\chi_p(\gamma) \cap \gamma \neq \emptyset$.

\emph{Case 2}: $\gamma \not\subset S$.
Let $\gamma_p = \gamma \cap  S$, and let  $\bar{\gamma}_0$ and $\bar{\gamma}_1$ denote the connected components of $\gamma_p$ containing $o$ and $e_1$, respectively.
For $i=0,1$, we denote the endpoint of  $\bar{\gamma}_i$ on $L_p$ by $x_i$. Clearly, since $\gamma$ is simple and continuous,  $x_0$ is on the left-hand side of  $x_1$, and the curve  $\bar{\gamma}_0 \cup [x_0,x_1] \cup \bar{\gamma}_1 \cup [o,e_1]$ is a Jordan curve. We denote the interior of this curve by $D_p$.

Consider the case where $p \notin D_p$. Then $p$ is an exterior point of $D_p$, and there is a connected component $\gamma^*$ of $\gamma_p$, with endpoints on $L_p$, that separates $p$ from $L$. Since the reflections of the endpoints of $\gamma^*$ about $p$ lie on $L$, we may apply the argument in Case 1, and obtain that $\emptyset \neq \gamma^* \cap \chi_p(\gamma^*) \subseteq \gamma \cap \chi_p(\gamma)$.
Thus, we may assume that $p \in D_p$.

If $\chi_p( x_0) \in [o,e_1]$, then the continuity of  $\bar{\gamma}_0$ and $\chi_p(o) \notin \cl D$ implies that $\emptyset \neq  \gamma \cap \chi_p(\bar{\gamma}_0) \subseteq \gamma \cap \chi_p(\gamma)$. If $\chi_p( x_1 ) \in [o,e_1]$, then we may apply a similar argument, and thus we may assume that $ \chi_p(x_0), \chi_p(x_1) \notin [o,e_1]$. This implies that either $[o,p_1] \subset  [\chi_p(x_0), \chi_p(x_1)]$, or  $[\chi_p(x_0), \chi_p(x_1)]$ and $[o,p_1]$ are disjoint.

 The relation $[o,p_1] \subset [\chi_p(x_0), \chi_p(x_1)]$ yields $[\chi_p(o),\chi_p(p_1)] \subset [x_0, x_1]$, and, by the previous argument, we have $\emptyset \neq \chi_p(\bar{\gamma_0}) \cap \bar{\gamma}_1 \subseteq \gamma \cap \chi_p(\gamma)$. Thus, we are left with the case where $[\chi_p(x_1), \chi_p(x_2)]$ and $[o,p_1]$ are disjoint; without loss of generality we may assume that $\chi_p(x_1)$,  $\chi_p(x_0)$, $o$ and $e_1$ are in this consecutive order on $L$. Let $U$ be the closure of the connected component of  $S \setminus \bar{\gamma}_0$ containing $\bar{\gamma}_1$.
Then $\chi_p(p) = p \in \inter U \cap \chi_p(U)$, implying that $\emptyset \neq \gamma_1 \cap \chi_p(\gamma_1) \subseteq \gamma \cap \chi_p(\gamma)$.
\end{proof}

The proof of Lemma~\ref{lem:convex_curve} is based on the idea of the proof of Proposition~\ref{lem:simple_curve}, with some necessary modifications.

\begin{lem}\label{lem:convex_curve}
Let $k \geq 2$, and let $\gamma \subset \Re^2$ be a convex, continuous curve connecting $o$ and $e_1$ such that its intersection with the $x$-axis is $\{ o,e_1\}$.
Let $D$ be the interior of the closed Jordan curve $\gamma \cup [o,e_1]$. For $i=0,1,\ldots,k-1$, let $\gamma_i = \frac{i}{k} e_1 + \frac{1}{k} \gamma$, and  $D_i = \frac{i}{k} e_1 + \frac{1}{k} D$. 
Then $\cl \left( D \setminus (\bigcup_{i=1}^k D_i) \right) \subseteq \frac{1}{k} \gamma[k]$, and for any $i \neq j$, $D_i \cap D_j  \subseteq \frac{1}{k}\gamma[k]$.
\end{lem}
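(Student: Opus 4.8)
The plan is to adapt the proof of Proposition~\ref{lem:simple_curve} to the case of $k$ copies and a convex curve, exploiting convexity to control how the reflected translates of $\gamma$ meet $\gamma$. As before, fix a point $p$ that we want to show lies in $\frac1k\gamma[k]$; it suffices to find $q\in\gamma$ with $\chi_p(q)\in\gamma$, equivalently $\gamma\cap\chi_p(\gamma)\neq\emptyset$. We assume $\gamma$ lies in the half-plane $\{y\le 0\}$ and is convex, so $D$ is a convex region. For the two types of points we must handle: first, $p\in\cl(D\setminus\bigcup_{i=1}^k D_i)$, and second, $p\in D_i\cap D_j$ for some $i\neq j$. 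I would treat the first case in detail and remark that the second is analogous (just as in Proposition~\ref{lem:simple_curve}), or better, note that $D_i\cap D_j$ with $i<j$ is itself a translate/scaling configuration reducible to a comparison of two scaled copies of $\gamma$, so the two-copy argument of Proposition~\ref{lem:simple_curve} applies after rescaling.

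For the main case, take $p\in\cl(D\setminus\bigcup_{i=1}^k D_i)$. The key geometric observation is that $o,e_1\in\gamma$, so $\frac{i}{k}e_1+\frac{k-1}{k}o=\frac{i}{k}e_1$ and $\frac{i}{k}o+\frac{k-1}{k}\cdot\frac{k-i}{k-1}e_1$-type combinations show that the points $\frac jk e_1$, $j=0,\dots,k$, all lie in $\frac1k\gamma[k]$, and the translated curves $\gamma_i=\frac ik e_1+\frac1k\gamma$ all lie in $\frac1k\gamma[k]$ (each is a sum of one copy of $\frac1k\gamma$ with $\frac{k-1}{k}$ times appropriate endpoints). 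So we may assume $p$ is not on any $\gamma_i$ and not one of these lattice points. Now let $L$ be the $x$-axis. Since $p\notin\cl\bigcup D_i$, in particular $p$ is "above" the union of the small regions in a suitable sense; I would look at the chord of $D$ through $p$ in the direction of... actually, the cleanest approach mirrors Case 1 of Proposition~\ref{lem:simple_curve}: the segment $[\chi_p(o),p]=[2p-o,p]$ must cross $\gamma$ because $p\in D$ while $2p-o=\chi_p(o)\notin\cl D$ (this last fact follows from $p\notin\cl\bigcup D_i$, since $2p$ being in $\cl D$ would force $p\in\cl D_0$ where $D_0=\frac12 D$-analogue — wait, here the scaling is $\frac1k$, so I instead use $\chi_p(\tfrac{k-1}{k}e_1\text{-combinations})$; more precisely the relevant point is that if $r$ is a vertex of one of the $B_j$-type decompositions... ). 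Let me restate: I use that $p\notin D_0=\frac1k D$ translated, etc., to deduce that the reflection $\chi_p$ of suitable boundary points of $\frac1k\gamma[k-1]$-pieces escapes $\cl D$, and convexity of $\gamma$ (hence of the strip-type regions) forces the reflected curve to re-enter, giving an intersection point.

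The honest structure I would write is the following. Decompose $\frac1k\gamma[k]$'s relevant sub-pieces: for $p\in D$, consider its representation and note $p\in\frac1k\gamma[k]$ iff $\gamma\cap\chi_p(\gamma)\ne\emptyset$. Using convexity, the set $\chi_p(\gamma)$ is again a convex curve, congruent to $-\gamma$ translated, and lies in the lower or upper half-plane accordingly. The strip $S$ between $L$ and $L_p=\chi_p(L)$ (as in the Proposition) contains $\gamma$ when $\gamma\subset S$; convexity guarantees that when a convex arc has both an interior and an exterior point of the convex region $D$, and lies in a strip whose two boundary lines are the two lines containing the "base" chords, it must cross $\gamma$. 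So the heart is: (i) reduce to showing $\chi_p(\gamma)$ meets $\gamma$; (ii) show $\chi_p$ of an endpoint, or of a suitable sub-arc endpoint, lies outside $\cl D$, using $p\notin\cl\bigcup_i D_i$; (iii) apply a continuity/connectedness argument (the signed-distance function $f_p$ as in Lemma~\ref{lem:boundary}, or an intermediate-value argument along $\gamma$) to produce the crossing; (iv) handle the sub-case $\gamma\not\subset S$ by passing to the connected components of $\gamma\cap S$ exactly as in Case 2 of Proposition~\ref{lem:simple_curve}, where convexity makes the component structure simpler (at most two arcs meeting each line). For the claim $D_i\cap D_j\subseteq\frac1k\gamma[k]$: if $q\in D_i\cap D_j$ with $i<j$, write $q-\frac ik e_1\in\frac1k D$ and $q-\frac jk e_1\in\frac1k D$; I would show $q$ is a midpoint-type combination of a point of $\gamma_i$ and a point of $\gamma_j$ by the same reflection argument applied to the scaled curve $\frac1k\gamma$ shifted appropriately, then pad with endpoints $o,e_1$ to reach $k$ summands.

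The main obstacle I anticipate is case (iv) together with the bookkeeping of which translate/reflection to use: in Proposition~\ref{lem:simple_curve} with $k=2$ there were only $D_0,D_1$, and "$p\notin\cl(D_0\cup D_1)$" directly gave $\chi_p(o),\chi_p(e_1)\notin\cl D$; with general $k$ one must identify, for a given $p\in\cl(D\setminus\bigcup D_i)$, the correct pair of "anchor" points whose reflections escape $\cl D$ — this should follow because the $D_i$ tile $D$ up to the symmetric-difference/overlap set, so $p$ outside all of them forces it into the "gaps", and the convexity of $\gamma$ pins down which chord directions work. Making this selection precise, and verifying the strip $S$ indeed contains the relevant sub-arc (or reducing to components when it does not), is where the real work lies; the rest is a faithful transcription of the $k=2$ argument with convexity simplifying the topology of intersections.
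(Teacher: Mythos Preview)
There is a genuine gap in your approach. You write that it suffices to find $q\in\gamma$ with $\chi_p(q)\in\gamma$, where $\chi_p$ is the point reflection $x\mapsto 2p-x$. But $\gamma\cap\chi_p(\gamma)\neq\emptyset$ is the criterion for $p\in\frac12\gamma[2]$, not for $p\in\frac1k\gamma[k]$; and $\frac12\gamma[2]$ is not contained in $\frac1k\gamma[k]$ in general (your ``pad with endpoints $o,e_1$'' idea does not repair this: from $p=\frac12(a+b)$ one gets $kp=\frac k2 a+\frac k2 b$, which for odd $k$ cannot be rewritten as a sum of $k$ points of $\gamma$ just by inserting copies of $o$ or $e_1$). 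You also notice mid-argument that ``$p\notin\cl D_0$'' with $D_0=\frac1kD$ does \emph{not} yield $\chi_p(o)=2p\notin\cl D$; this is exactly where the reflection is the wrong map.

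The paper fixes both issues with two ideas you are missing. First, it replaces $\chi_p$ by the homothety $\chi_p^k(x)=\frac{k}{k-1}p-\frac{x}{k-1}$ of ratio $-\frac{1}{k-1}$: if $q\in\gamma$ and $r:=\chi_p^k(q)\in\gamma$, then $kp=q+(k-1)r$, so $p\in\frac1k\gamma[k]$ directly. Second, it argues by \emph{induction on $k$}: the region $\frac{k-1}{k}\cl D$ and its translate $\frac1k e_1+\frac{k-1}{k}\cl D$ are handled by the induction hypothesis applied to $\frac{k-1}{k}\gamma$, and only for $p$ outside both of these does one run the strip/crossing argument --- and there the condition $p\notin\frac{k-1}{k}\cl D$ is precisely what gives $\chi_p^k(o)\notin\cl D$ (and similarly for $e_1$). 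The $D_i\cap D_j$ part is likewise dispatched by induction after observing that convexity of $D$ forces $p\in D_s$ for all $i\le s\le j$, so one may assume $i,j\le k-1$ or $i,j\ge 1$. Without the homothety $\chi_p^k$ and the inductive reduction, the scheme you outline cannot be completed.
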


\begin{proof}
First observe that $D$ is convex, hence $D_i$ is contained in $D$ for all values of $i$.
Let us denote the $x$-axis by  and, for any $p \in \Re^2$, let $\chi_p^k:\Re^2 \to \Re^2$ be the homothety with center $p$ and ratio $-\frac{1}{k-1}$  defined by $\chi_p^k(x)=\frac{k}{k-1}p-\frac{x}{k-1}$, for $x\in\Re^2$. Furthermore, we set $L_p^k = \chi_p^k(L)$, and denote the infinite strip between $L$ and $L_p^k$ by $S$.
The assertion for $k=2$ is a special case of Proposition~\ref{lem:simple_curve}. To prove it for $k \geq 3$, we apply induction on $k$, and assume that the lemma holds for $\gamma[k-1]$.

Let $p \in \cl\left( D \setminus (\bigcup_{i=1}^k D_i) \right)$.
Clearly, since $(\bd D) \setminus (\bigcup_{i=1}^k D_i) = \gamma \subseteq \gamma[k]$, we may assume that $p \in D$.
By the induction hypothesis for $\frac{k-1}{k} \gamma$ , if $p \in X_1=\frac{k-1}{k} \cl D$, then $p \in \frac{k-1}{k} \cdot \frac{1}{k-1} \gamma[k-1] = \frac{1}{k} \gamma[k-1] \subseteq \frac{1}{k} \gamma[k]$. Similarly, if $p \in X_2= \frac{1}{k}e_1 + \frac{k-1}{k} \cl D$, then $p \in \frac{1}{k}e_1 + \frac{1}{k} \gamma[k-1] \subseteq \frac{1}{k} \gamma[k]$. Thus, assume that $p \notin X_1 \cup X_2$, which yields that $\chi_p^k(o)$ and $\chi_p^k(e_1)$ are in the exterior of $D$.
Let the (unique) intersection point of $[p,\chi_p^k(o)]$ and $\gamma$ be $q_1$ and the (unique) intersection point of $[p,\chi_p^k(e_1)]$ and $\gamma$ be $q_2$.
As $\chi_p^k(q_1) \in [o,p]$, the convexity of $D$ implies that $\chi_p^k(q_1) \in D$, and the containment $\chi_p^k(q_2) \in D$ follows similarly.

Similarly like in Proposition~\ref{lem:simple_curve}, if $\gamma \subset S$, then by continuity, $\gamma \cap \chi_p^k(\gamma) \neq \emptyset$, which implies the containment $p \in \frac{1}{k} \gamma[k]$.
Assume that $\gamma \not\subset S$. Then $S \cap \gamma$ has two connected components $\gamma_1$, $\gamma_2$, where we choose the indices such that $o \in \gamma_1$, and $e_1 \in \gamma_2$. Clearly, we have either $q_1 \in \gamma_2$, $q_2 \in \gamma_1$, or both. If $q_1 \in \gamma_2$, then the containment relations
$\chi(q_1) \in D$, $\chi(e_1) \notin \cl D$, and $\chi_p^k(\gamma_2) \subset S$ yield that $\emptyset \neq \gamma_1 \cap \chi_p^k(\gamma_2) \subset \gamma \cap \chi_p^k(\gamma)$. If $q_2 \in \gamma_1$, then the assertion follows by a similar argument.

Finally, we consider the case that $p \in D_i \cap D_j$ for some $i < j$. In this case the convexity of $D$ implies that $p \in D_s$ for any $i \leq s \leq j$.
This yields that there are some distinct values $i,j \leq k-1$ or $i,j\geq 2$ such that $p \in D_i \cap D_j$. Thus, the assertion readily follows from the induction hypothesis.
\end{proof}

Lemma~\ref{lem:convexholes} is a variant of Lemma~\ref{lem:main} for some path-connected sets in $\Re^2$.

\begin{lem}\label{lem:convexholes}
Let $k \geq 2$ and $\gamma$ be a bounded convex curve in $\Re^2$, and
let $\gamma[k] \subseteq M \subseteq k \conv \gamma$. Then
\[
\area \left( \frac{1}{k} M \right) \leq \area \left( \frac{1}{k+1} (M+\gamma) \right).
\]
\end{lem}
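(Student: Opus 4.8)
The plan is to run the argument of Lemma~\ref{lem:main} in dimension two, with the role of the corner cells played by the ``holes'' of $\gamma[k]$ exhibited in Lemma~\ref{lem:convex_curve}. I would first dispense with the trivial cases: if $\conv\gamma$ has empty interior then $\area(k\conv\gamma)=0$ and both sides of the claimed inequality vanish; and if $\gamma$ is a closed convex curve (so $\gamma=\bd\conv\gamma$), then $\gamma[k]=k\conv\gamma$ and $\gamma[k+1]=(k+1)\conv\gamma$ by Corollary~\ref{cor:ksumboundary} and convexity, which forces $M=k\conv\gamma$ and $M+\gamma=(k+1)\conv\gamma$ and gives equality. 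In the remaining case $\gamma$ is a convex arc; since the inequality is preserved when an invertible affine map is applied to $\gamma$ together with the corresponding affine map to $M$ (both areas are multiplied by the same Jacobian), I may normalise $\gamma$ so that it connects $o$ to $e_1$, lies in $\{y\le 0\}$, and meets the $x$-axis only at $o$ and $e_1$, so that Lemma~\ref{lem:convex_curve} applies. Then $\conv\gamma$ is the convex region bounded by $\gamma$ and $[o,e_1]$; I write $D:=\inter\conv\gamma$, $a:=\area\conv\gamma$ and $h_i:=ie_1+D$ for $i\in\N$.

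Rescaling Lemma~\ref{lem:convex_curve} by $k$ (resp.\ $k+1$) shows that $h_0,\dots,h_{k-1}$ are translates of $D$ lying in $\inter(k\conv\gamma)$ with $\gamma[k]\supseteq\bigl(k\conv\gamma\setminus\bigcup_{i=0}^{k-1}h_i\bigr)\cup\bigcup_{i\neq i'}(h_i\cap h_{i'})$ up to a null set, and similarly that $h_0,\dots,h_k$ lie in $\inter((k+1)\conv\gamma)$ with the analogous statement for $\gamma[k+1]$. In particular, although the holes may overlap, every overlap $h_i\cap h_{i'}$ lies in $\gamma[k]$; and since $o\in\gamma$ we have $\gamma\subseteq\gamma[k]\subseteq M$, hence $\gamma[k+1]=\gamma[k]+\gamma\subseteq M+\gamma\subseteq k\conv\gamma+\conv\gamma=(k+1)\conv\gamma$. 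Set $\mu_i:=\area(h_i\setminus M)$ for $0\le i\le k-1$, $\sigma:=\sum_i\mu_i$, and $\nu_j:=\area(h_j\setminus(M+\gamma))$ for $0\le j\le k$. Because the overlaps $h_i\cap h_{i'}$ are contained in $\gamma[k]\subseteq M$, the sets $h_i\setminus M$ are pairwise disjoint and their union equals $k\conv\gamma\setminus M$ up to a null set, so $\area(M)=\area(k\conv\gamma)-\sigma=k^2a-\sigma$; likewise $\area(M+\gamma)=(k+1)^2a-\sum_{j=0}^{k}\nu_j$. Thus the assertion is equivalent to $\sum_{j=0}^{k}\nu_j\le\frac{(k+1)^2}{k^2}\sigma$, and since $\frac{k+1}{k}\le\frac{(k+1)^2}{k^2}$ it suffices to prove $\sum_{j=0}^{k}\nu_j\le\frac{k+1}{k}\sigma$.

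Since $o,e_1\in\gamma$, we have $M\cup(M+e_1)\subseteq M+\gamma$. Hence for $1\le j\le k$, a point $p\in h_j\setminus(M+\gamma)$ satisfies $p\notin M+e_1$, i.e.\ $p-e_1\notin M$, while $p-e_1\in h_j-e_1=h_{j-1}$; and for $0\le j\le k-1$, such a $p$ satisfies $p\notin M$. These give $\nu_0\le\mu_0$, $\nu_k\le\mu_{k-1}$, and $\nu_j\le\min(\mu_{j-1},\mu_j)$ for $1\le j\le k-1$. Using $\min(x,y)=x+y-\max(x,y)$ and $\sum_{j=1}^{k-1}(\mu_{j-1}+\mu_j)=2\sigma-\mu_0-\mu_{k-1}$,
\[
\sum_{j=0}^{k}\nu_j\ \le\ \mu_0+\mu_{k-1}+\sum_{j=1}^{k-1}\min(\mu_{j-1},\mu_j)\ =\ 2\sigma-\sum_{j=1}^{k-1}\max(\mu_{j-1},\mu_j),
\]
so everything reduces to the elementary inequality
\[
\sum_{j=1}^{k-1}\max(\mu_{j-1},\mu_j)\ \ge\ \tfrac{k-1}{k}\,\sigma\qquad(\mu_0,\dots,\mu_{k-1}\ge0).
\]
To prove it, fix $m\in\{0,\dots,k-1\}$, split the sum at $m$, and estimate $\max(\mu_{j-1},\mu_j)\ge\mu_{j-1}$ for $j\le m$ and $\max(\mu_{j-1},\mu_j)\ge\mu_j$ for $j>m$; this gives $\sum_{j=1}^{k-1}\max(\mu_{j-1},\mu_j)\ge\sum_{i\neq m}\mu_i=\sigma-\mu_m$, and choosing $m$ with $\mu_m=\min_i\mu_i\le\frac1k\sigma$ finishes it. Plugging back, $\sum_{j=0}^{k}\nu_j\le 2\sigma-\frac{k-1}{k}\sigma=\frac{k+1}{k}\sigma$, as required.

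I expect the combinatorial step at the end to be the real point: one genuinely has to combine both one-sided bounds on $\nu_j$ and then invoke the sharp constant $\frac{k-1}{k}$, since weaker estimates such as $\nu_j\le\mu_{j-1}$ (which only gives $\sum_j\nu_j\le2\sigma$) or $\nu_j\le\tfrac12(\mu_{j-1}+\mu_j)$ (which gives $\sum_j\nu_j\le\tfrac32\sigma$) fail to imply the result once $k\ge3$, respectively $k\ge5$. The only other subtlety is the possible overlapping of the holes $h_i$, which is harmless precisely because Lemma~\ref{lem:convex_curve} places each overlap inside $\gamma[k]$ (resp.\ $\gamma[k+1]$), so that the area identities for $M$ and $M+\gamma$ remain valid.
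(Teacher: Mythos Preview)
Your proof is correct and follows the same overall strategy as the paper (normalise $\gamma$, invoke Lemma~\ref{lem:convex_curve} to locate the ``holes'' $h_i=ie_1+D$, exploit $o,e_1\in\gamma$ to compare hole contributions for $M$ and $M+\gamma$, and assemble a $\tfrac{k+1}{k}$-type inequality), but the execution is different and in fact cleaner than the paper's.

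The paper parametrises by the \emph{filled} area $\mu_i$ (resp.\ $\lambda_i$) of $M$ (resp.\ $M+\gamma$) in the part of $D_i$ that belongs to no other $D_j$; this forces it to introduce the overlap parameters $\alpha=\area(D\cap(e_1+D))$ and $\beta=\area\bigl(D\cap((e_1+D)\cup(-e_1+D))\bigr)$ and to carry correction terms $\beta-\alpha$ through the estimates, and it then uses the specific weights $\tfrac{i}{k},\tfrac{k-i}{k}$ (as in Lemma~\ref{lem:main}) to bound $\sum\lambda_i\ge\tfrac{k+1}{k}\sum\mu_i-\tfrac{2}{k}(\beta-\alpha)$. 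You instead parametrise by the \emph{missing} areas $\mu_i=\area(h_i\setminus M)$ and $\nu_j=\area(h_j\setminus(M+\gamma))$; since Lemma~\ref{lem:convex_curve} places every pairwise overlap $h_i\cap h_{i'}$ inside $\gamma[k]\subseteq M$, the sets $h_i\setminus M$ are automatically disjoint and the overlap parameters never appear. Your combinatorial step is also different: rather than fixed convex-combination weights you use $\nu_j\le\min(\mu_{j-1},\mu_j)$ together with the elementary pigeonhole bound $\sum_{j=1}^{k-1}\max(\mu_{j-1},\mu_j)\ge\sigma-\min_i\mu_i\ge\tfrac{k-1}{k}\sigma$. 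Both routes land on the same final inequality $\area(M+\gamma)\ge\tfrac{k+1}{k}\area(M)+(k+1)\area(\conv\gamma)$; yours gets there with less bookkeeping, at the cost of not paralleling the weight scheme of Lemma~\ref{lem:main}.
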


\begin{proof}
If $\gamma$ is closed, then Lemma~\ref{lem:boundary} yields that $\frac{1}{k} \gamma[k]= \conv \gamma$ for all $k \geq 2$, which clearly implies the statement.
Assume that $\gamma$ is not closed. Since the   inequalities in Lemma~\ref{lem:convexholes} do not change under affine transformations, we may assume that
the endpoints of $\gamma$ are $o$ and $e_1$, and the $x$-axis is a supporting line of $\conv \gamma$.

\noindent Let us define 
\[
D = \conv \gamma,  \alpha = \area(D \cap (e_1+D)), \mbox{    and    }\beta = \area \left( D \cap \left((e_1+D) \cup (-e_1+D) \right) \right).
\]
Note that $0 \leq \alpha \leq \beta \leq  2\alpha$.
Let $D_i = i e_1 + D$ for $i=0,1,\ldots,k$.
For $0 \leq i \leq k-1$, let $\mu_i$ be the area of the region of $M$ in $D_i$ that do not belong to any $D_j$, $j \neq i$, where we note that
since $k \geq 2$, by Lemma~\ref{lem:convex_curve} we have that all other points of $D_i$ belong to $M$.
Similarly, for $0 \leq i \leq k$, let $\lambda_i$ be the area of the region of $M+\gamma$ in $D_i$ that do not belong to any $D_j$, $j \neq i$.
An elementary computation shows that
\begin{equation}\label{eq:areaM}
\begin{split}
\area(M) = & k^2 \area(D) - 2(\area(D)-\alpha) - (k-2) (\area(D)-\beta) + \sum_{i=0}^{k-1} \mu_i\\ = &(k^2-k) \area(D) + 2\alpha +(k-2)\beta + \sum_{i=0}^{k-1} \mu_i,
\end{split}
\end{equation}
and similarly,
\begin{equation}\label{eq:area_Mgamma}
\area(M+\gamma) = (k^2+k) \area(D) + 2\alpha + (k-1) \beta + \sum_{i=0}^{k} \lambda_i.
\end{equation}
Since $o, e_1 \in \gamma$, we have $M, e_1 + M \subseteq M+\gamma$. Thus, $\lambda_0 \geq \mu_0$, $\lambda_k \geq \mu_{k-1}$,
$\lambda_1 \geq \max \{ \mu_0-(\beta-\alpha), \mu_1 \}$, $\lambda_{k-1} \geq \max \{ \mu_{k-2}, \mu_{k-1}-(\beta-\alpha) \}$, and
for $2 \leq i \leq k-2$, $\lambda_i \geq \max \{ \mu_{i-1},\mu_i\}$.
Since $\lambda_i \geq \frac{i}{k} \mu_{i-1} + \frac{k-i}{k}\mu_i$ if $2 \leq i \leq k-2$, and
$\lambda_i \geq \frac{i}{k} \mu_{i-1} + \frac{k-i}{k}\mu_i - \frac{1}{k}(\beta-\alpha)$ if $i=1$ or $i=k-1$, it follows that
\[
\sum_{i=0}^{k} \lambda_i  \geq \frac{k+1}{k}\sum_{i=1}^{k-1} \mu_i - \frac{2}{k} (\beta-\alpha).
\]
Thus, by (\ref{eq:areaM}),
\[
\sum_{i=0}^{k} \lambda_i \geq \frac{k+1}{k} \left( \area(M) - (k^2-k) \area(D) - 2\alpha - (k-2)\beta \right) - \frac{2}{k} (\beta-\alpha).
\]
After substituting this into (\ref{eq:area_Mgamma}) and simplifying, we obtain
\[
\area(M+\gamma) \geq \frac{k+1}{k} \area(M) + (k+1) \area(D),
\]
which yields
\[
\area\left(\frac{1}{k+1} (M + \gamma)\right) \geq \frac{k}{k+1} \area \left( \frac{1}{k} M \right) + \frac{1}{k+1} \area(D).
\]
Thus, the inequality $\area \left( \frac{1}{k} M \right) \leq \area(D)$ yields the assertion.
\end{proof}

In Theorem~\ref{thm:convexholes}, by an open topological disc we mean the bounded connected component defined by a Jordan curve,  and recall that a convex body is a compact, convex set with nonempty interior.

\begin{thm}\label{thm:convexholes} 
Let $k \geq 2$. Let $K$ be a plane convex body, and let $\mathcal{F} = \{ F_i : i \in I\}$ be a family of pairwise disjoint  open topological discs such that if $F_i \cap \bd K\neq\emptyset$ then $F_i \cap \bd K$ is a connected curve and $F_i$ is convex.
Let $X = K \setminus \left( \bigcup_{i\in I} F_i \right)$.
Then
\[
\area\left( \frac{1}{k} X[k] \right) \leq \area\left( \frac{1}{k+1} X[k+1] \right).
\]
\end{thm}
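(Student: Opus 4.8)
The plan is to decompose the convex body $K$ into pieces on which Lemma~\ref{lem:convexholes} applies, in exact analogy with how Theorem~\ref{thm:main} was deduced from Lemma~\ref{lem:main}. First I would reduce to the case of finitely many holes: by an exhaustion/approximation argument one removes only a controlled area, so it suffices to prove the inequality when $I$ is finite. Then I would triangulate. Pick a reference point, say a point $o$ in the interior of $X$ away from all the $F_i$'s (if the holes fill a neighborhood of every interior point one must be slightly more careful, but for finitely many holes this is fine), and cut $K$ into finitely many triangles $T_1,\dots,T_m$ with $o$ as a common vertex and the other vertices on $\bd K$, refined so that each triangle meets at most one hole $F_i$ and, when it does, meets it in a way compatible with the curve structure. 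The triangles $\conv(B_j)$ play the role they played in the proof of Theorem~\ref{thm:main}: they are non-overlapping and cover $K$.

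The key point is to set up, on each triangle $T_j$, the right curve $\gamma_j$ and the right set $M_j$ so that Lemma~\ref{lem:convexholes} gives $\area(\frac{1}{k+1}(M_j+\gamma_j))\ge \area(\frac1k M_j)$. When $T_j$ misses all holes, take $\gamma_j = \bd T_j$ (a closed convex curve), $M_j = X[k]\cap (k\,\conv\gamma_j) = k T_j$, and the inequality is the trivial equality $\area(\frac1k \cdot kT_j)=\area(T_j)$ after noting the closed-curve case of Lemma~\ref{lem:convexholes}. When $T_j$ meets a hole $F_i$ in a connected convex-boundary arc, I would take $\gamma_j$ to be the convex curve cut out on $\bd T_j \cap \bd F_i$ together with the relevant edges — more precisely, the convex curve bounding $T_j \cap X = T_j\setminus F_i$ on the side of the hole — together with the straight edges of $T_j$, arranged so that $\conv\gamma_j = T_j$ and $\gamma_j[k]\subseteq M_j := X[k]\cap(kT_j)\subseteq k\,\conv\gamma_j$. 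Here the hypothesis that $F_i$ is convex and $F_i\cap\bd K$ is a connected curve is exactly what makes $\bd(T_j\setminus F_i)$ a convex curve to which Lemma~\ref{lem:convexholes} applies. Summing,
\[
\area\!\left(\tfrac1k X[k]\cap K\right)=\sum_{j=1}^m \area\!\left(\tfrac1k M_j\right)\le \sum_{j=1}^m \area\!\left(\tfrac{1}{k+1}(M_j+\gamma_j)\right)\le \area\!\left(\tfrac{1}{k+1}X[k+1]\right),
\]
using $\gamma_j\subseteq X$, hence $M_j+\gamma_j\subseteq X[k+1]$, and the fact that the $\conv\gamma_j=T_j$ tile $K$ while $X[k]\subseteq k\,\conv X = kK$.

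The main obstacle I anticipate is the bookkeeping of the triangulation near the holes: one must guarantee that each triangle $T_j$ meets at most one hole, that the hole's intersection with $T_j$ still has a connected, convex boundary arc (so that the relevant curve $\gamma_j$ really is convex and the decomposition of $kT_j\cap X[k]$ into the $M_j$ is clean), and that the $M_j$ genuinely partition $X[k]\cap K$ up to null sets. With finitely many holes this can be arranged by choosing the triangulation fine enough and in ``general position'' with respect to the (finitely many) convex arcs $\bd F_i$, but the verification that $\gamma_j[k]\subseteq M_j$ — i.e. that no point of $X[k]$ lands inside $kF_i$ within the cell $kT_j$ — requires the same type of argument as in Lemma~\ref{lem:convex_curve}, applied locally. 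A secondary point is the reduction from arbitrary $\mathcal F$ to finite $\mathcal F$: since the $F_i$ are pairwise disjoint open sets inside the bounded set $K$, all but finitely many have area below any prescribed threshold, and removing a further set of small area changes $\area(\frac1k X[k])$ and $\area(\frac1{k+1}X[k+1])$ by a small amount (monotonicity of Minkowski sums and continuity of volume), so a limiting argument closes the gap; the details here mirror the $\varepsilon$-argument at the end of the proof of Theorem~\ref{thm:main}.
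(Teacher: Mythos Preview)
Your triangulation approach has a genuine gap. The crux is the claim that ``$\bd(T_j\setminus F_i)$ is a convex curve to which Lemma~\ref{lem:convexholes} applies.'' It is not: when a convex open set $F_i$ bites into the outer edge of the triangle $T_j$, the arc $T_j\cap\bd F_i$ curves \emph{inward} (toward $o$), so $T_j\setminus F_i$ has a concave dent and its boundary fails to be a convex curve. Nor can you repair this by taking $\gamma_j$ to be just the two radial edges $[o,p_1^j]\cup[o,p_2^j]$ (which \emph{is} a convex curve with $\conv\gamma_j=T_j$): that choice requires $\gamma_j\subseteq X$, i.e.\ every segment $[o,p]$ with $p\in\bd K$ must avoid every hole. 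This is precisely the condition that $X$ be star-shaped from $o$, which the theorem does not assume and which is easily violated (picture two long convex slits entering $K$ from opposite sides and overlapping past the centre). A further omission is that holes with $F_i\cap\bd K=\emptyset$ need not be convex at all --- they are merely open topological discs --- so inside a triangle they yield no convex curve whatsoever.

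The paper's argument is organised differently and uses no global triangulation. First, interior holes are disposed of via Lemma~\ref{lem:boundary}: since $\bd F_i\subset X$ is a closed Jordan curve, one has $F_i\subset\frac{1}{k}(\bd F_i)[k]\subset\frac{1}{k}X[k]$ for every $k\ge2$, so these holes are automatically filled by the Minkowski averaging and can be ignored. After passing to a finite subfamily of boundary holes (the $\varepsilon$-approximation you anticipated), Lemma~\ref{lem:convexholes} is applied not on triangles but \emph{inside each hole}: the curve is $\gamma_i=K\cap\bd F_i$, which is convex because $F_i$ is; one takes $M_i=X[k]\cap kD_i$ with $D_i=F_i\cap K$ and checks $\gamma_i[k]\subseteq M_i\subseteq k\conv\gamma_i$. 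Outside the union $D_\varepsilon$ of these hole regions both $\frac{1}{k}X[k]$ and $\frac{1}{k+1}X[k+1]$ already contain $X$ and are contained in $X_\varepsilon$, so that part of the area matches up to $\varepsilon$. Summing the per-hole inequalities from Lemma~\ref{lem:convexholes} and letting $\varepsilon\to0$ finishes the proof. The moral: the decomposition that makes Lemma~\ref{lem:convexholes} fire is into the convex hulls of the hole-boundary arcs, not into simplices with apex at an interior point.
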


\begin{proof}
 Clearly, we may assume that each $F_i$ intersects $K$, and also for each $F_i$, $(\bd K) \setminus F_i$ is infinite, since removing the first type discs does not change $X$, and if there is some $F_i$ such that $(\bd K) \setminus F_i$ is finite, then $X$ is either $\emptyset$ or a singleton, and in both cases the statement is trivial. Thus, we have that if $F_i$ intersects $\bd K$, then the boundary of the convex set $F_i \cap K$ consists of the two connected, convex curves $F_i \cap \bd K$ and $K \cap \bd F_i$.

First, note that since each member of $\mathcal{F}$ has positive area, it has countably many elements; indeed, for any $\delta > 0$ there are only finitely many elements $F_i$ of $\mathcal{F}$ for which $\area(F_i \cap K) \geq \delta$, and thus, we may list the elements according to area.
Furthermore, since $X$ is compact, $\area(X)$ exists.

By Lemma~\ref{lem:boundary}, we may assume that every member of $\mathcal{F}$ intersects $\bd K$;  indeed, if some $F_i$ does not intersect $\bd K$, then $\bd F_i$ is a compact, connected set in $X$, implying that $F_i \subseteq \frac{1}{k} (\bd F_i)[k] \subseteq \frac{1}{k} X[k]$ for all $k \geq 2$.
For any $i \in I$, let $\gamma_i$ denote the part of $\bd F_i$ in $K$. Clearly, $\gamma_i$ is a convex curve, and  the segment connecting its endpoints lie in $K$ by convexity. As the two endpoints of $\gamma_i$ are in $\bd K$, the line through them supports $K \setminus F_i$.
Choose some finite subfamily $I_{\varepsilon} \subseteq I$ such that $\area \left( X_{\varepsilon} \setminus X\right) \leq \varepsilon$, where $X_{\varepsilon} = K \setminus \left( \bigcup_{i \in I_{\varepsilon}}  F_i \right)$.
This is possible, since for any ordering of the elements, $\sum_{i \in I} \area(K \cap F_i)$ is a bounded series with positive elements, and hence, it is absolute convergent, and convex sets with small area and bounded diameter are contained in a small neighborhood of their boundary.

\begin{figure}[ht]
\begin{center}
\includegraphics[width=.3\textwidth]{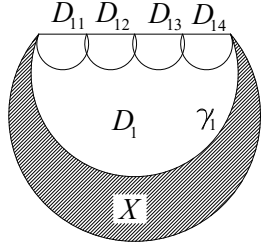}
\caption{An illustration for the proof of Theorem~\ref{thm:convexholes}.}
\label{fig:thm2}
\end{center}
\end{figure}

For any $i \in I_{\varepsilon}$, we set $D_i = F_i \cap K$, and observe that $D_i$ is a convex set separated from $X_{\varepsilon}$ by the convex curve $\gamma_i$.
 Let the endpoints of $\gamma_i$ be $q_i^1$ and $q_i^2$, and let $D_{i1}$ be the homothetic copy of $D_i$ with ratio $\frac{1}{k}$ and center $\_i^1$. Furthermore, for $j=2,3,\ldots,k$, let $D_{ij}= \frac{j-1}{k} \left( q_i^2 - q_i^1 \right) + D_{i1}$ (cf. Figure~\ref{fig:thm2}). Then, by Lemma~\ref{lem:convex_curve}, $\frac{1}{k} \gamma_i[k] \subseteq \frac{1}{k} X_{\varepsilon}[k]$ contains all points of $D_i$ belonging to none of the $D_{ij}$s or to at least two of them.
Let $M_i = (X[k] \cap (kD_i))$. Then $M_i \subseteq \conv(kD_i)$, and thus, Lemma~\ref{lem:convexholes} yields that
\[
\area \left( \frac{1}{k} M_i \right) \leq \area \left( \frac{1}{k+1} (M_i+\gamma_i) \right).
\]
On the other hand, with the notation $D_{\varepsilon} = \bigcup_{i \in I_{\varepsilon}} D_i$, we have
\[
\area \left( \frac{1}{k}  X[k] \cap D_{\varepsilon} \right) = \sum_{i \in I_{\varepsilon}} \area \left( \frac{1}{k} M_i \right),
\]
and
\[
\area \left(  \frac{1}{k+1} X[k+1] \cap D_{\varepsilon} \right) \geq \sum_{i \in I_{\varepsilon}} \area \left( \frac{1}{k+1} (M_i+\gamma_i) \right),
\]
and thus, we have $\area \left( \frac{1}{k}  X[k] \cap D_{\varepsilon} \right) \leq \area \left( \frac{1}{k+1} X[k+1] \cap  D_{\varepsilon}\right)$.
On the other hand, since $\area(X_{\varepsilon} \setminus X) < \varepsilon$, $X_{\varepsilon} \cup  D_{\varepsilon} = \conv X$, and $X \subseteq X_{\varepsilon}$,
we have that $\area(\frac{1}{m} X_[m] \setminus  D_{\varepsilon} ) \leq \varepsilon$ for all $m \geq 1$. This implies that
\[
\area \left( \frac{1}{k} X_[k] \right) \leq \area \left( \frac{1}{k+1} X[k+1] \right) - \varepsilon.
\]
 This holds for all $\varepsilon >0$, which yields the assertion.
\end{proof}

\section{Additional remarks and questions}\label{sec:remarks}

\begin{rem}
One can ask if the statement of Theorem~\ref{thm:main} holds for arbitrary measure instead of volume. The answer to this question is negative.
Indeed, consider the measure $\mu(K) = \vol (K \cap C)$, where $C=\left[ - \frac{1}{d} , \frac{1}{d} \right]^d$ and $S=\bigcup_{i=1}^d [o,e_i]$, where $e_1,e_2,\ldots,e_d$ are the vectors of the standard orthonormal basis. Then, clearly, we have
\[
\mu\left(\frac{1}{2k} S[2k]\right) = \frac{1}{2^d} \vol(C) > \mu\left(\frac{1}{2k+1} S[2k+1]\right).
\]
\end{rem}

\begin{rem}
The statement of Theorem~\ref{thm:main} does not hold for arbitrary measures even for rotationally invariant measures in the plane:
for any value of $k$ there is a compact, star-shaped set $S \subset \Re^2$ such that
$\vol\left(\frac{1}{k}S[k] \cap B_2^2 \right) > \vol\left(\frac{1}{k+1}S[k+1] \cap B_2^2 \right)$.
To prove this, set $S=[o,e_1] \cup [o,e_2]$, and let $E$ denote the ellipse centered at $o$ and containing the points $(1-1/k,0)$ and $(1-2/k,1/k)$. It is an elementary computation to check that in this case $\vol\left(\frac{1}{k}S[k] \cap E \right) = \frac{1}{4} \vol(E)$. On the other hand,
the boundary point $(1-2/(k+1),1/(k+1))$ of $\frac{1}{k+1}S[k+1]$ lies in $\inter (E)$, which implies that $\vol\left(\frac{1}{k+1}S[k+1] \cap E \right) < \frac{1}{4} \vol(E)$. Now, if $f : \Re^2 \to \Re^2$ is defined as the linear transformation mapping $E$ into $B_2^2$, then $f(S)$ satisfies the required conditions.
\end{rem}

One can use star-shaped sets together with ideas from \cite{FMMZ2} to give a negative answer to a more general version of Conjecture~\ref{conj:BMW}, also from \cite{BMW}.

\begin{conj}[Bobkov-Madiman-Wang]\label{conj:BMW2} 
For any $k \geq 2$, and any  compact sets $A_1, A_2, \ldots, A_{k+1}$ in $\Re^d$,  we have
\[
\vol\left(\sum_{i=1}^{k+1} A_i\right)^{1/d}  \geq \frac{1}{k} \sum_{i=1}^{k+1} \vol\left( \sum_{j \neq i} A_j\right)^{1/d}.
\]
in particular,  for $k=2$,
\begin{equation}\label{eqcon3}
\begin{split}
\vol(A_1+ A_2+& A_3)^{1/d}  \\ \geq & \frac{1}{2}  \left(\vol\left(A_1+A_2\right)^{1/d}   +  \vol\left(A_1+A_3\right)^{1/d}  + \vol\left(A_2+A_3\right)^{1/d}   \right).
\end{split}
\end{equation}
\end{conj}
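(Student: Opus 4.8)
For convex bodies the inequality holds: since $\sum_{i=1}^{k+1}\bigl(\sum_{j\ne i}A_j\bigr)=k\sum_{i=1}^{k+1}A_i$ for convex summands, iterated Brunn--Minkowski gives $k\,\vol\bigl(\sum_i A_i\bigr)^{1/d}\ge\sum_i\vol\bigl(\sum_{j\ne i}A_j\bigr)^{1/d}$, which is exactly Conjecture~\ref{conj:BMW2}. So any counterexample must be genuinely non-convex, and the plan is in fact to \emph{disprove} the conjecture: I will produce, for every $d\ge 7$, compact star-shaped sets $A_1,A_2,A_3\subset\Re^d$ for which \eqref{eqcon3} fails --- this already contradicts the conjecture, which asserts the inequality for every $k\ge 2$. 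Before building anything I would record the constraint that forces the summands to be distinct: setting $A_1=A_2=A_3=A$ turns \eqref{eqcon3} into $\vol\bigl(\tfrac13 A[3]\bigr)\ge\vol\bigl(\tfrac12 A[2]\bigr)$, i.e. precisely Conjecture~\ref{conj:BMW}, which by \cite{FMMZ2} fails only for $d\ge 12$; the five extra dimensions must therefore be bought by the asymmetry among the three sets and by the fact that the right-hand side of \eqref{eqcon3} is a sum of three \emph{separately} tunable volumes.

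The construction would follow the philosophy of the counterexamples in \cite{FMMZ2}: each $A_i$ is a star-shaped body assembled at the origin from finitely many thin, full-dimensional pieces (cones over low-dimensional faces), non-convex as a set, yet chosen so that the pairwise Minkowski sums $A_i+A_j$ are much closer to their convex hulls --- and hence much larger in volume --- than any single $A_i$. The defining data, namely the directions of the pieces, would be placed in general but overlapping position so that all three pairwise sums and the triple sum are full-dimensional in $\Re^7$ and the three pairwise volumes are comparable. The failure is then driven by the interplay between the strict concavity of $t\mapsto t^{1/d}$ and the convexifying effect of Minkowski addition: for convex summands the quantities $\vol(A_i+A_j)$ sit in a rigid ratio to $\vol(A_1+A_2+A_3)$ that makes \eqref{eqcon3} an equality when the sets coincide, and the non-convexity must be arranged so that passing from the pairwise sums to the triple sum enlarges the volume by strictly less than convexity would dictate while the pairwise volumes stay balanced, so that the averaged right-hand side overtakes the left.

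To make this a verifiable statement I would put the whole picture on the integer lattice, exactly as in the proof of Lemma~\ref{lem:main}: arrange that $A_i+A_j$ and $A_1+A_2+A_3$ are unions of unit cube cells together with boundary cells whose partial volumes are the simplex-slice quantities $V(\cdot)$ used there. Then $\vol(A_i+A_j)$ and $\vol(A_1+A_2+A_3)$ become finite sums of binomials times slice volumes, and \eqref{eqcon3} collapses to a single arithmetic inequality in $d$, which one checks is violated at $d=7$ (and, with room to spare, for $d>7$). This is consistent with Theorem~\ref{thm:main}, which only forces monotonicity-type inequalities for $k\ge(d-1)(d-2)$: here $k=2$ while $(d-1)(d-2)=30$ at $d=7$, so there is no obstruction coming from our own results.

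The step I expect to be the real difficulty is precisely this calibration: the pairwise sums must be simultaneously full-dimensional, large, and mutually comparable, while the triple-sum volume must not run away --- demands that pull against one another because Minkowski addition both enlarges and convexifies. Concretely, one has to identify the smallest dimension admitting a configuration of the pieces that realizes all the required incidences and still makes the closed-form inequality come out the wrong way, and then verify that this dimension is exactly $7$; threading that needle is what \cite{FMMZ2} does, in a related setting, through what amounts to an optimization over such configurations, and adapting it to the three-set inequality \eqref{eqcon3} --- together with checking the final arithmetic --- is where the genuine work lies.
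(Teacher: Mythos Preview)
Your reading of the task is correct: the point is not to prove the conjecture but to exhibit, for every $d\ge 7$, compact (star-shaped) sets $A_1,A_2,A_3\subset\Re^d$ violating \eqref{eqcon3}. Your remark that the convex case is trivial also matches the paper. But from there your plan and the paper's actual construction diverge sharply.

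The paper's counterexample is far more elementary than what you outline. There are no thin cones, no lattice bookkeeping in the style of Lemma~\ref{lem:main}, and no delicate calibration. In $\Re^7$ one simply takes
\[
A_1=[0,1]^4\times\{0\}^3,\qquad A_2=\{0\}^4\times[0,1]^3,
\]
both convex and lower-dimensional, and
\[
A_3=\bigl([0,a]^4\times\{0\}^3\bigr)\cup\bigl(\{0\}^4\times[0,b]^3\bigr),
\]
star-shaped at the origin. Because the pieces live in complementary coordinate subspaces, all the Minkowski sums are boxes or unions of two boxes, and the volumes are one-line computations:
\[
\vol(A_1+A_2)=1,\quad \vol(A_1+A_3)=b^3,\quad \vol(A_2+A_3)=a^4,\quad \vol(A_1+A_2+A_3)=(a+1)^4+(b+1)^3-1.
\]
Then one just checks numerically, e.g.\ at $a=3$, $b=6$, that
\[
\bigl((a+1)^4+(b+1)^3-1\bigr)^{1/7}<\tfrac12\bigl(1+a^{4/7}+b^{3/7}\bigr),
\]
and lifts to $d>7$ by taking a direct product with a cube.

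So the genuine gap in your proposal is not correctness but economy: you are reaching for the full machinery of \cite{FMMZ2} and Lemma~\ref{lem:main} when the two free parameters $a,b$ --- available precisely because the sets are allowed to be distinct --- make a completely explicit two-variable optimization possible. Several of your working assumptions are also unnecessary or off: two of the three sets can be taken \emph{convex} (only $A_3$ carries the non-convexity), all three are lower-dimensional rather than ``thin full-dimensional pieces'', and the pairwise volumes need not be comparable (here they are $1$, $81$, $216$). The insight you are missing is that placing the building blocks in \emph{complementary} coordinate subspaces makes every Minkowski sum a box or a union of two boxes, so the whole problem collapses to a single arithmetic inequality with no combinatorics at all.
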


The above  conjecture is trivial for convex sets. Moreover, (\ref{eqcon3}) is true  when $A_1=A_2$ and $A_1$ is convex. Indeed, in this case (\ref{eqcon3}) is equivalent to
\[
\vol\left(A_1+A_1+A_3\right)^{1/d} \geq       \vol\left(A_1\right)^{1/d}   +  \vol\left(A_1+A_3\right)^{1/d},
\]
 which follows from the Brunn-Minkowski inequality \cite{Sch}. 
 
It was proved in \cite{FMMZ2} that Conjecture \ref{conj:BMW2} is true in $\Re$. Since an affirmative answer to Conjecture~\ref{conj:BMW2} implies also Conjecture~\ref{conj:BMW}, the former is also false for $d \geq 12$ by \cite{FMMZ1, FMMZ2}. Here we show that  Conjecture~\ref{conj:BMW2} is false in $\Re^d$  for $d \geq 7$.

\begin{prop}\label{rem:counterexample}
For any $d \geq 7$, there are compact, star-shaped sets $A_1, A_2, A_3 \subset \Re^d$ satisfying
\[
\vol\left(A_1+ A_2+ A_3\right)^{1/d} < \frac{1}{2}  \left(        \vol\left(A_1+A_2\right)^{1/d}   +  \vol\left(A_1+A_3\right)^{1/d}  + \vol\left(A_2+A_3\right)^{1/d}   \right).
\]
\end{prop}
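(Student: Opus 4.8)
The plan is to adapt the construction of Fradelizi, Madiman, Marsiglietti and Zvavitch \cite{FMMZ2} for the failure of Conjecture~\ref{conj:BMW} in dimension $12$, but to exploit the extra freedom coming from allowing three \emph{different} summands to push the dimension threshold down to $7$. Recall that in \cite{FMMZ2} the counterexample to $\vol(\frac12 A[2]) \le \vol(\frac13 A[3])$ was obtained by taking $A$ to be a small perturbation of the union of a simplex-type skeleton $B = \bigcup_{i=1}^d[o,p_i]$ with a well-chosen extra point, so that $A[2]$ is already essentially all of $2\conv B$ (it fills the ``large'' cube cells), whereas the convex hull is only barely missed, and then a careful count of the contribution of the finitely many boundary cells shows that $\frac1d$-scaling loses more volume than it gains. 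The $1/d$-homogeneous reformulation in Conjecture~\ref{conj:BMW2} is exactly what makes the three-summand version strictly stronger than the iterate version, so the natural move is to set $A_1 = A_2 = A$ a star-shaped skeleton-plus-point set and $A_3 = B = \bigcup_{i=1}^d[o,e_i]$ (or a perturbation thereof), and to verify that the resulting inequality is violated.

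First I would fix notation as in Lemma~\ref{lem:main}: let $B=\bigcup_{i=1}^d[o,e_i]$ with $e_1,\dots,e_d$ the standard basis, let $A = B \cup \{v\}$ for a cleverly chosen point $v$ close to $\frac1d(e_1+\cdots+e_d)$ (so that $A$ is star-shaped with respect to $o$ and $\conv A$ differs from $\conv B$ by a tiny sliver), and compute the three relevant volumes. The point is that $A+A$ and $A+B$ both essentially equal the ``full'' region $2\conv B$ up to the boundary cells — using the cell decomposition and the combinatorial count $\binom{t}{d}$, $N_d(t)=\binom{t+d-1}{d-1}$ from Lemmata~\ref{lem:integerpoints} and the computation in Lemma~\ref{lem:main} — so that $\vol(A_1+A_2)$, $\vol(A_1+A_3)$, $\vol(A_2+A_3)$ are all close to $2^d V$ with $V=\frac1{d!}$, while $\vol(A_1+A_2+A_3)$ is close to $3^d V$ but falls short by a quantifiable amount governed by whether the extra point $v$ lets the triple sum reach the cells of $3\conv B$ adjacent to the ``long diagonal''. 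I would parametrize $v$ by a single scalar and expand each volume to first order in the perturbation parameter $\varepsilon$ and the parameter measuring how far $v$ sticks out; after dividing by the common leading behaviour and taking $1/d$-th roots (Taylor expansion of $x\mapsto x^{1/d}$ around $2^d V$ and $3^d V$), the desired strict inequality reduces to a single numerical inequality in $d$ that holds precisely when $d \ge 7$.

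The main obstacle I expect is the exact bookkeeping of the boundary-cell contributions for the triple sum $A+A+A_3$: unlike the two-fold sums, the triple Minkowski sum of a skeleton with a marked point has a more intricate structure near the faces of $3\conv B$, and one must show that adding the third copy of $B$ (rather than a third copy of $A$) genuinely fails to fill enough of the outer cells. Concretely, the delicate step is to produce a lower bound on $\vol(A+A+B)$ that is \emph{tight enough} — a crude bound of the Shapley–Folkman–Starr type is far too weak; instead one needs, in the spirit of the proof of Lemma~\ref{lem:main} run ``in reverse,'' an estimate showing the triple sum omits a set of volume bounded below by a fixed fraction of $V$ times the number of diagonal-adjacent cells, which grows like $d$ relative to the $\binom{3d}{d}$-type total. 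Once that lower bound on the omitted volume and matching upper bounds on the two-fold sums' omitted volumes are in hand, the rest is the elementary Taylor-expansion calculation sketched above, and the threshold $d\ge 7$ emerges from comparing $2^{(d-1)/d}$-type coefficients with $3^{(d-1)/d}$-type coefficients — I would double-check the crossover by direct substitution of $d=6$ and $d=7$ at the end.
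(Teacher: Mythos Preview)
Your proposal is not a proof but a plan, and the plan has a fatal flaw. You take $A_1=A_2=A=B\cup\{v\}$ and $A_3=B$ with $B=\bigcup_{i=1}^d[o,e_i]$, and assert that ``$A+A$ and $A+B$ both essentially equal the `full' region $2\conv B$ up to the boundary cells.'' This is false: for $d\ge 3$ the set $B[2]$ is a union of $2$-dimensional parallelograms and has $d$-dimensional volume zero, and adjoining a single point $v$ does not help---$A+A=2B\cup(B+v)\cup\{2v\}$ and $A+B=2B\cup(B+v)$ are still at most $2$-dimensional complexes, so $\vol(A+A)=\vol(A+B)=0$. Likewise $A+A+B$ has volume zero once $d\ge 4$. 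Thus every term in the inequality you want to violate vanishes, and no choice of $v$ or Taylor expansion can rescue the argument. (Lemma~\ref{lem:main} concerns $B[k]$ for $k\ge\max\{2,(d-1)(d-2)\}$, precisely because one needs that many copies before the sum has positive volume; with only two or three summands the cell machinery is vacuous.) Even if you thicken $B$ to a full-dimensional set, the heuristic ``$A+A\approx 2\conv B$'' becomes an inequality in the wrong direction for small $k$, and the ``delicate step'' you flag is not a technicality but the entire problem.

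The paper's proof is of a completely different, and far simpler, nature: it writes down explicit sets
\[
A_1=[0,1]^4\times\{0\}^3,\quad A_2=\{0\}^4\times[0,1]^3,\quad A_3=([0,a]^4\times\{0\}^3)\cup(\{0\}^4\times[0,b]^3),
\]
computes all four volumes in closed form (the two-fold sums are boxes or unions of boxes; the three-fold sum is a union of two overlapping boxes), and checks numerically that $a=3$, $b=6$ gives a strict violation in $\Re^7$. Higher dimensions follow by taking direct products with a cube. No perturbation, no cell decomposition, no asymptotics---just a one-line computation.
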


\begin{proof}
We give the proof for $d=7$ and the result follows for $d>7$ by taking direct products with a cube. 
 Consider the sets 
\[
A_1=[0,1]^4\times \{0\}^3; A_2=\{0\}^4 \times [0,1]^3 
 \mbox{ and  } A_3=([0,a]^4 \times \{0\}^3) \cup (\{0\}^4 \times [0,b]^3),
\]
where we select $a,b > 0$ later.  Since these sets are lower dimensional, one has $\vol(A_1)=\vol(A_2)=\vol(A_3)=0$.
An elementary consideration shows that
\[
\vol(A_1+A_3)= b^3,    \vol(A_2+A_3)=a^4 \mbox{ and } \vol(A_1+A_2)=1,
\]
and
\[
\vol(A_1+A_2+A_3)=(a+1)^4+(b+1)^3 -1.
\]
The last step is to show that, with $a=3$ and $b=6$, the quantity
\[
((a+1)^4+(b+1)^3 -1)^{1/7} -\frac{1}{2}\left(a^{4/7}+b^{3/7}+1 \right) 
\]
is negative, which gives a counterexample to (\ref{eqcon3}).
\end{proof}

\end{document}